\newcommand{\R}{\mathbb{R}}
\DeclareMathOperator{\Id}{Id}
\DeclareMathOperator{\sign}{sign}
\newtheorem{proposition}{Proposition}[section]
\newtheorem{corollary}[proposition]{Corollary}
\newtheorem{theorem}[proposition]{Theorem}
\newtheorem{lemma}[proposition]{Lemma}
\theoremstyle{remark}
\newtheorem{remark}[proposition]{Remark}
\newtheorem{example}[proposition]{Example}
\title{Observations on the metric projection in finite dimensional Banach~spaces}
\author{Christian Bargetz\footnote{Universität Innsbruck, Technikerstraße 13, 6020 Innsbruck, Austria, \texttt{christian.bargetz@uibk.ac.at}} \and Franz Luggin\footnote{Universität Innsbruck, Technikerstraße 13, 6020 Innsbruck, Austria, \texttt{franz.luggin@uibk.ac.at}}}
\begin{document}
\maketitle

\begin{abstract}
  \noindent\textbf{Abstract.} We consider the method of alternating (metric) projections for pairs of linear subspaces of finite dimensional Banach spaces. We investigate the size of the set of points for which this method converges to the metric projection onto the intersection of these subspaces. In addition we give a characterisation of the pairs of subspaces for which the alternating projection method converges to the projection onto the intersection for every initial point. We provide a characterisation of the linear subspaces of~$\ell_p^n$, $1<p<\infty$, $p\neq 2$, which admit a linear metric projection and use this characterisation to show that in~$\ell_p^3$, $1<p<\infty$, $p\neq 2$, the set of pairs of subspaces for which the alternating projection method converges to the projection onto the intersection is small in a probabilistic sense.\\[2mm]
  \noindent\textbf{Keywords.} Metric projection, finite dimensional Banach space, alternating projection method, linearity of the metric projection in~$\ell_p^n$\\[2mm]
  \vspace{3mm}
  \noindent\textbf{Mathematics Subject Classifications (2020).} 47J26, 41A65, 46B99
\end{abstract}

\section{Introduction}

For a Banach space $X$ and a closed subspace $M$, we denote by
\[
  P_{M}x := \{y\in M\colon \|x-y\|=d(x,M)\}
\]
the set of points which realise the distance between the subspace $M$ and the point $x\in X$. For strictly convex reflexive spaces the set $P_Mx$ is a singleton. Therefore, for these spaces and for every closed subspace $M$, we can consider the mapping
\[
  P_M\colon X\to X, \qquad x\mapsto P_{M}x
\]
which is called the \emph{metric projection onto~$M$}. Under some regularity assumptions on the Banach space $X$, which will always be satisfied in our setting, the metric projection is continuous, see e.g~\cite{HolmesKripke, Wulbert}.

Given closed linear subspaces $M, N\subset X$ we are interested in the question of whether the alternating projection method, which for $x_0 \in X$ is defined by
\[
  x_{2k+1} = P_M x_{2k} \qquad x_{2k} = P_Nx_{2k-1} \qquad \text{for}\; k\in\mathbb{N}_0,
\]
converges to the projection $P_{M\cap N} x_0$ of $x_0$ onto the intersection $M\cap N$.

In~\cite{vNe1949:RingsOfOperators}, J. von Neumann showed that if $X$ is a Hilbert space the alternating projection method always converges to the projection on to the intersection. A geometric proof of this result has been given by E.~Kopeck\'{a} and S.~Reich in~\cite{KopeckaReich}, see also~\cite{KopeckaReich2, KopeckaReich3}. However it turns out that this behaviour is strongly tied to the Hilbert space case. In~\cite{Sti1965:ClosestPointMaps} W.~Stiles showed that if for a Banach space $X$ of dimension at least three for all pairs of subspaces $(M,N)$ the alternating projection method converges to the projection onto the intersection $M\cap N$, then $X$ is a Hilbert space. In other words, every Banach space of dimension at least three, which is not a Hilbert space, contains at least two closed linear subspaces $M$ and $N$ for which the alternating projection method does not converge to the projection onto $M\cap N$. On the other hand, also in~\cite{Sti1965:ClosestPointMaps}, W.~Stiles showed that in finite dimensional smooth and strictly convex Banach spaces, the alternating projection method always converges to \emph{some} element of $M\cap N$. In~\cite{AtlestamSullivan}, B.~Atlestam and F.~Sullivan generalised these results to an infinite dimensional setting while imposing additional conditions on $M$ and $N$.

These results motivate the following three questions on the alternating projection method in finite dimensional Banach spaces.
\begin{enumerate}[(i)]
\item For subspaces $M$ and $N$ for which the alternating projection method does not converge to the projection onto $M\cap N$, how large is the set of points $x\in X$ for which $(P_MP_N)^nx$, or in the opposite order, converges to $P_{M\cap N}x$? One might hope that also in this case the set of points for which is occurs might be a large set.
\item For how many pairs of subspaces $(M,N)$ does the alternating projection method converge to the projection onto $M\cap N$ for all points of $X$?
\item How large is the distance between the limit of the alternating projection method and the projection onto the intersection?
\end{enumerate}

In this article, we address the questions~\emph{(i)} and~\emph{(ii)}.

In Section~2, we show that the convergence of the alternating projection method to the metric projection onto the intersection happens on a closed set $A$ containing $M\cap N$. We exhibit an example where both $A$ and its complement have nonempty interior and show that if $M\cap N$ contains an interior point of $A$, then $A=X$. We give a characterisation of pairs $(M,N)$ of subspaces where the alternating projection method converges to $P_{M\cap N}$ for all points and apply this to show that linearity of $P_{M\cap N}$ is sufficient for this behaviour.

In Section~3, we characterise the subspaces of $\ell_p^3$, $1<p<\infty$, $p\neq 2$ for which the metric projection is linear.

In Section~4 we apply the characterisation given in Section~3 to show that set of pairs~$(M,N)$ of subspaces of $\ell_p^3$ for which the alternating projection method converges to $P_{M\cap N}$ is small in a probabilistic sense.

The results of Sections~3 and~4 are part of the second author's master thesis~\cite{masterthesis} which was written under the supervision of the first author.
\section{Alternating projections in finite dimensional spaces}

We consider a finite-dimensional strictly convex and smooth Banach space~$X$ and two closed linear subspaces $M, N\subset X$.

We recall the following results due to W.~Stiles.
\begin{proposition}[{Theorem~3.1 in~\cite[p.~24]{Sti1965:ClosestPointMaps}}]\label{prop:kConstStiles}
  Let~$X$ be finite-dimensional, strictly convex, and smooth. Given two subspaces $M, N \subset X$ there is a number $0<k<1$ such that
  \[
    \|P_MP_Nx-P_Nx\| \leq k \|P_Nx-x\|
  \]
  for all $x\in M$.
\end{proposition}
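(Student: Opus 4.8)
The plan is to reduce everything to a statement about the single quantity
\[
  \phi(x) \;:=\; \frac{\|P_MP_Nx-P_Nx\|}{\|P_Nx-x\|} \;=\; \frac{d(P_Nx,M)}{\|x-P_Nx\|},\qquad x\in M\setminus N,
\]
namely that $\sup_{x\in M\setminus N}\phi(x)<1$. Indeed, if $M\subseteq N$ the inequality is vacuous (both sides vanish for every $x\in M$), and otherwise, once the supremum bound is known, one takes $k:=\max\{\sup_{x\in M\setminus N}\phi(x),\tfrac12\}\in(0,1)$: on $M\cap N$ both sides vanish, and on $M\setminus N$ the asserted inequality is exactly $\phi(x)\le k$. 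Note that $\phi$ is well defined because $x\notin N$ forces $x\neq P_Nx$, and that $\phi\le 1$ automatically, since $x\in M$ is a competitor for the distance $d(P_Nx,M)$.

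First I would prove the pointwise strict inequality $\phi(x)<1$ for every $x\in M\setminus N$. Assume $\phi(x_0)=1$ and write $y_0:=P_Nx_0\neq x_0$. Then $d(y_0,M)=\|x_0-y_0\|$, so $x_0$ is a nearest point of $M$ to $y_0$; by strict convexity nearest points are unique, hence $x_0=P_My_0$. Passing to the subspace $W:=M+N$ (which contains $x_0$ and $y_0$, is again finite-dimensional, strictly convex and smooth, and on which the metric projections onto $M$ and $N$ restrict to the original ones), I may assume $X=M+N$. Let $f$ be the unique norm-one functional with $f(x_0-y_0)=\|x_0-y_0\|$, which exists because $X$ is smooth. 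The variational characterisation of the metric projection onto a subspace in a smooth space states that the norm-attaining functional of $z-P_Lz$ annihilates $L$; applying this with $L=N$, $z=x_0$ gives $f\in N^\perp$, and with $L=M$, $z=y_0$ (where $y_0-P_My_0=-(x_0-y_0)$ has norm-attaining functional $-f$) gives $-f\in M^\perp$, hence $f\in M^\perp$. Therefore $f$ annihilates $M+N=X$, contradicting $\|f\|=1$.

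For the uniform bound I would run a compactness argument after removing the two sources of non-compactness. The map $\phi$ is continuous on $M\setminus N$ (the metric projections are continuous under our standing assumptions and the denominator does not vanish there), it satisfies $\phi(tx)=\phi(x)$ for all $t\neq 0$ because $P_N$ is homogeneous, and it satisfies $\phi(x+\ell)=\phi(x)$ for all $\ell\in M\cap N$ because $P_N(x+\ell)=P_Nx+\ell$ with $\ell\in M$. Consequently $\phi$ factors through a continuous function on the projective space $\mathbb{P}\bigl(M/(M\cap N)\bigr)$, which is compact and, since $M\not\subseteq N$, nonempty. By the previous step this induced function is everywhere strictly below $1$, so it attains a maximum $k<1$, which is the constant we want.

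The main obstacle is the pointwise step, i.e.\ excluding a configuration $x_0=P_MP_Nx_0$ with $x_0\neq P_Nx_0$ --- a genuine two-periodic orbit of the alternating scheme. This is precisely where the two hypotheses on $X$ enter: strict convexity to upgrade ``$x_0$ is a nearest point'' to ``$x_0$ is the metric projection'', and smoothness to make the norm-attaining functional of $x_0-y_0$ unique, so that the two projection identities can be combined into a single nonzero functional annihilating $M+N$. Without first reducing to $X=M+N$ this functional need not vanish, so that reduction, though elementary, is not optional.
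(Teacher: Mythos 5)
Your proof is correct. A caveat on the comparison: the paper does not prove this proposition at all---it is imported verbatim from Stiles' Theorem~3.1---so the relevant benchmark is the classical argument, and yours follows essentially that route: first the strict pointwise inequality $\phi(x)<1$, then a compactness argument made possible by quotienting out the two invariances of $\phi$ (homogeneity and translation by $M\cap N$). The hypotheses enter exactly where they must: strict convexity only to make $P_M,P_N$ single-valued (being a nearest point already gives the Birkhoff--James orthogonality you use, so invoking uniqueness there is harmless but not essential), smoothness to get a \emph{single} norm-one functional annihilating both $M$ and $N$, and the preliminary passage to $W=M+N$ (which inherits strict convexity and smoothness, and on which the projections restrict correctly) is indeed indispensable for the contradiction, as you note. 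One cosmetic simplification: rather than factoring $\phi$ through the projective space of $M/(M\cap N)$, you can fix a linear complement $M_0$ of $M\cap N$ in $M$ and observe that your invariances give $\sup_{M\setminus N}\phi=\max_{x\in M_0,\ \|x\|=1}\phi(x)$, the maximum of a continuous function on a compact set disjoint from $N$; this avoids verifying that the induced function on the quotient/projectivization is continuous.
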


W.~Stiles uses this result to show that for a strictly convex finite-dimensional Banach space $X$ the sequence $(P_MP_N)^nx$ converges to a point in $M\cap N$ for every $x\in X$ if and only if~$X$ is smooth. A careful examination of the proof given in~\cite{Sti1965:ClosestPointMaps} shows that for smooth spaces, the sequence $(P_MP_N)^n$ of operators converges uniformly on bounded sets. We summarise this observation in the following theorem.

\begin{theorem}\label{thm:Retraction}
  Let~$X$ be a finite-dimensional, strictly convex, and smooth Banach space and let $M,N\subset X$ two subspaces. The sequence $\{T_n\}_{n=1}^{\infty}$ where
  \[
    T_0=I, \qquad T_{2n+1}=P_NT_{2n} \qquad\text{and} \qquad T_{2n}=P_MT_{2n-1}
  \]
  converges to a retraction~$R_{M,N}$ onto $M\cap N$, uniformly on bounded sets. The retraction $R_{M,N}$ satisfies
  \[
    R_{M,N}(\lambda x) = \lambda R_{M,N}x \qquad \text{and}\qquad R_{M,N}(z+x) = z+ R_{M,N}x
  \]
  for all $z\in M\cap N$, $\lambda\in\mathbb{R}$ and $x\in X$.  
\end{theorem}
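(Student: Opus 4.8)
The plan is to run the alternating scheme pointwise and to extract geometric decay of consecutive differences from Proposition~\ref{prop:kConstStiles}. Fix $x\in X$ and set $x_n:=T_nx$, so that $x_0=x$, $x_{2n+1}=P_Nx_{2n}$ and $x_{2n}=P_Mx_{2n-1}$; in particular $x_{2n}\in M$ for $n\ge1$ and $x_{2n+1}\in N$ for $n\ge0$. First I would apply Proposition~\ref{prop:kConstStiles} to the point $x_{2n}\in M$, which supplies a constant $0<k<1$ depending only on $M$ and $N$ with
\[
  \|x_{2n+2}-x_{2n+1}\| = \|P_MP_Nx_{2n}-P_Nx_{2n}\| \le k\,\|P_Nx_{2n}-x_{2n}\| = k\,\|x_{2n+1}-x_{2n}\|
\]
for $n\ge1$. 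For the intermediate (odd) step I would only use that $x_{2n-1}\in N$ while $x_{2n+1}$ is a nearest point of $N$ to $x_{2n}$, so that
\[
  \|x_{2n+1}-x_{2n}\| = d(x_{2n},N) \le \|x_{2n}-x_{2n-1}\|
\]
for $n\ge1$. Iterating these two estimates, with the base cases $\|x_1-x_0\|=d(x,N)\le\|x\|$ and $\|x_2-x_1\|=d(x_1,M)\le\|x_1\|=\|P_Nx\|\le2\|x\|$, yields $\|x_{n+1}-x_n\|\le2k^{\lfloor(n-1)/2\rfloor}\|x\|$ for $n\ge1$. Hence $(x_n)_n$ is Cauchy, and I define $R_{M,N}x:=\lim_nx_n$.

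The retraction properties are then read off from this. The subsequence $(x_{2n})_{n\ge1}$ lies in the closed subspace $M$ and converges to $R_{M,N}x$, so $R_{M,N}x\in M$; likewise $(x_{2n+1})_n\subset N$ forces $R_{M,N}x\in N$, and therefore $R_{M,N}$ maps $X$ into $M\cap N$. If $z\in M\cap N$, then $P_Mz=P_Nz=z$, so $x_n=z$ for all $n$ and $R_{M,N}z=z$. For uniform convergence on bounded sets I would sum the tail of the geometric estimate,
\[
  \|T_nx-R_{M,N}x\| \le \sum_{m\ge n}\|x_{m+1}-x_m\| \le \frac{C\,k^{\lfloor(n-1)/2\rfloor}}{1-k}\,\|x\|,
\]
with $C$ and $k$ independent of $x$; this tends to $0$ uniformly over any fixed ball. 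Since each $T_n$ is continuous (the metric projection is continuous in a finite-dimensional smooth and strictly convex space, see~\cite{HolmesKripke,Wulbert}), the locally uniform limit $R_{M,N}$ is continuous, hence a retraction onto $M\cap N$.

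The two functional equations I would obtain by passing to the limit in the analogous identities for the $T_n$. Because $M$ is a linear subspace, $\lambda M=M$ for $\lambda\ne0$, so $P_M(\lambda x)=\lambda P_Mx$ for all $\lambda\in\R$ (the case $\lambda=0$ being trivial), and likewise for $P_N$; thus $T_n(\lambda x)=\lambda T_nx$ for every $n$, and $R_{M,N}(\lambda x)=\lambda R_{M,N}x$. If $z\in M\cap N$, translation by $z$ is an isometry carrying $M$ onto $M$ and $N$ onto $N$, whence $P_M(z+x)=z+P_Mx$ and $P_N(z+x)=z+P_Nx$; an induction on $n$ gives $T_n(z+x)=z+T_nx$, and letting $n\to\infty$ yields $R_{M,N}(z+x)=z+R_{M,N}x$.

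The real content is entirely in Proposition~\ref{prop:kConstStiles}, which provides a single contraction constant $k$ valid for all starting points in $M$; the remaining steps are bookkeeping. The one point that needs care is that all the estimates are uniform in $x$ over bounded sets — $k$ depends only on $(M,N)$ and $\|x_1-x_0\|$ is bounded linearly by $\|x\|$ — which is precisely what upgrades the pointwise convergence $T_nx\to R_{M,N}x$ to convergence uniform on bounded sets.
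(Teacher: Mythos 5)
Your proof is correct and follows essentially the same route as the paper: extract geometric decay of the successive differences $\|T_{n+1}x-T_nx\|$ from Proposition~\ref{prop:kConstStiles}, conclude Cauchyness uniformly over bounded sets, and read off the range, retraction property, and the two functional equations from the corresponding properties of $P_M$ and $P_N$. The only (immaterial) difference is that the paper applies Stiles's proposition in both directions to get a factor $k$ at every step, while you apply it every other step and use non-expansiveness of the nearest-point map at the intermediate step, yielding a decay rate of $k^{\lfloor n/2\rfloor}$ instead of $k^{n}$.
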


The following proof is based on parts of the proof of Theorem~3.2 in~\cite[pp.~25--26]{Sti1965:ClosestPointMaps} and is mainly included to keep the paper self-contained.

\begin{proof}
  By Proposition~\ref{prop:kConstStiles}, there is a number $0<k<1$
  \[
    \|P_MP_Ny-P_Ny\| \leq k \|P_Ny-y\|  \qquad\text{and}\qquad  \|P_NP_Mz-P_Mz\| \leq k \|P_Mz-z\|
  \]
  for all $y\in M$ and all $z\in N$. Let $R>0$ and $x\in B(0,R)$. From the above inequalities we obtain
  \[
    \|T_{n+1}x-T_nx\| \leq k \|T_nx-T_{n-1}x\|
  \]
  and hence for $m>n$ we have
  \begin{align*}
    \|T_nx-T_mx\| &\leq \|T_nx-T_{n+1}x\| + \ldots + \|T_{m-1}x-T_mx\|\\
                  &\leq k^{n-1} \|P_MP_Nx-P_Nx\| + \ldots + k^{m-2} \|P_MP_Nx-P_Nx\| \\
                  & \leq k^{n-1} \frac{1}{1-k} \|P_Nx\| \leq k^{n-1} \frac{2 R}{1-k}
  \end{align*}
  and hence $\{T_n\}_{n=1}^{\infty}$ is a Cauchy sequence for the topology of uniform convergence on bounded subsets of $X$. Hence it converges uniformly on bounded sets to a continuous operator~$R_{M,N}$. Since for $x\in X$ we have $T_{2n}x\in M$ and $T_{2n+1}x \in N$. Hence the limit has to be an element of both closed subspaces $M$ and $N$. Hence the range of $R_{M,N}$ has to be a subset of~$M\cap N$. Moreover, since $P_Mz=P_Nz=z$ for all $z\in M\cap N$, we have $R_{M,N}z=z$ for all $z\in M\cap N$ and hence $R_{M,N}$ is a continuous retraction onto $M\cap N$. The claimed properties of $R_{M,N}$ are a direct consequence of the definition of $R_{M,N}$ and the properties of the metric projections $P_M$ and $P_N$.
\end{proof}

\begin{remark}
  Note that the limit might depend on the order in which the projections with respect to $M$ and $N$ are taken. So the previous theorem implies the existence of two continuous retractions $R_{M,N}$ and $R_{N,M}$ onto $M\cap N$. They are connected via
  \[
    R_{M,N}P_N = R_{N,M} \qquad \text{and} \qquad  R_{N,M}P_M = R_{M,N}
  \]
  since
  \[
    R_{N,M} = \lim_{n\to\infty} (P_NP_M)^nx =  \lim_{n\to\infty} P_N(P_MP_N)^{n-1}P_Mx = P_NR_{M,N}P_Mx = R_{M,N}P_Mx
  \]
  and
  \[
    R_{M,N} = \lim_{n\to\infty} (P_MP_N)^nx =  \lim_{n\to\infty} P_M(P_NP_M)^{n-1}P_Nx = P_MR_{N,M}P_Nx = R_{N,M}P_Nx.
  \]
  In particular we have $R_{M,N}=R_{N,M}$ whenever these retractions commute with the projections $P_M$ and $P_N$.
\end{remark}

The following observation gives a criterion for the convergence of the alternating projection method to the projection onto the intersection.

\begin{proposition}\label{prop:ptwEquiv}
  For $x\in X$ the following assertions are equivalent.
  \begin{enumerate}[(i)]
  \item $R_{M,N}x = R_{N,M}x = P_{M\cap N}x$.
  \item $P_{M\cap N}(P_MP_N)^nx = P_{M\cap N}(P_NP_M)^n x = P_{M\cap N}x$ for all $n\in\mathbb{N}$.
  \end{enumerate}
\end{proposition}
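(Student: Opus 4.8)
The plan is to prove the two implications separately. The implication $(ii)\Rightarrow(i)$ is the easy one: by Theorem~\ref{thm:Retraction} the point $R_{M,N}x=\lim_{n\to\infty}(P_MP_N)^nx$ lies in $M\cap N$ and is therefore fixed by $P_{M\cap N}$, so using the continuity of the metric projection together with $(ii)$ we get
\[
  R_{M,N}x=P_{M\cap N}R_{M,N}x=\lim_{n\to\infty}P_{M\cap N}(P_MP_N)^nx=\lim_{n\to\infty}P_{M\cap N}x=P_{M\cap N}x ,
\]
and the same argument with $P_NP_M$ in place of $P_MP_N$ gives $R_{N,M}x=P_{M\cap N}x$.

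For $(i)\Rightarrow(ii)$ I would first normalise. The maps $R_{M,N}$, $R_{N,M}$ and $P_{M\cap N}$ are all retractions onto $M\cap N$ which commute with translation by elements of $M\cap N$ (for the first two this is Theorem~\ref{thm:Retraction}), so replacing $x$ by $x-P_{M\cap N}x$ we may assume $P_{M\cap N}x=R_{M,N}x=R_{N,M}x=0$; then $(ii)$ is the assertion that $P_{M\cap N}$ vanishes at every $(P_MP_N)^nx$ and at every $(P_NP_M)^nx$. I would then record the identities $R_{M,N}P_N=R_{M,N}$, $R_{M,N}P_M=R_{N,M}$, $R_{N,M}P_M=R_{N,M}$, $R_{N,M}P_N=R_{M,N}$, which follow from the definition of the retractions (this is the computation behind the Remark) together with $P_M^2=P_M$ and $P_N^2=P_N$. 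These yield $R_{M,N}(P_MP_N)^n=R_{N,M}(P_MP_N)^n=R_{M,N}$ and the symmetric statement for the opposite order, so that, once $(i)$ is assumed, every point $w$ occurring on one of the two orbits $x,P_Nx,(P_MP_N)x,\dots$ and $x,P_Mx,(P_NP_M)x,\dots$ satisfies $R_{M,N}w=R_{N,M}w=0$. Consequently, at such a $w$, condition $(i)$ holds if and only if $P_{M\cap N}w=0$, and $(ii)$ reduces, by induction along the two orbits, to the single \emph{propagation step}: if $w$ satisfies $R_{M,N}w=R_{N,M}w=P_{M\cap N}w=0$, then $P_{M\cap N}P_Mw=P_{M\cap N}P_Nw=0$.

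The propagation step is, I expect, the main obstacle: the tower property $P_{M\cap N}P_N=P_{M\cap N}$ fails in general smooth spaces, so the hypotheses $R_{M,N}w=R_{N,M}w=0$ must genuinely be used. To show $P_{M\cap N}P_Nw=0$ I would argue by contradiction. If it fails then, by strict convexity, there is $z\in M\cap N\setminus\{0\}$ with $\|P_Nw-z\|<\|P_Nw\|$; since $P_N(w-tz)=P_Nw-tz$ for $t\in\R$ and $t\mapsto\|P_Nw-tz\|$ is convex, $z$ may be taken of arbitrarily small norm. On the other hand $P_{M\cap N}w=0$ forces $\|w-z\|\ge\|w\|$ for every such $z$. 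Now compare the orbit of $w$ with that of $w-z$: they differ by the fixed vector $z$ and, since $R_{M,N}w=R_{N,M}w=0$, converge respectively to $0$ and to $-z$; feeding the geometric convergence rate from Proposition~\ref{prop:kConstStiles} into this comparison, one aims to rule out the first-step reversal $\|P_Nw-z\|<\|P_Nw\|$. The delicate point will be to play this local, first-step information off against the asymptotic information supplied by the retractions; an alternative is to work throughout with the first-order characterisation of metric projections valid in smooth spaces — namely that, for $y\neq0$, $P_{M\cap N}y=0$ if and only if the unique norming functional of $y$ annihilates $M\cap N$ — and to track how that condition behaves under one projection step.
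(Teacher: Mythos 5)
Your proof of \textit{(ii)}$\Rightarrow$\textit{(i)} is correct and is exactly the paper's argument: $R_{M,N}x$ lies in $M\cap N$, is fixed by $P_{M\cap N}$, and continuity of $P_{M\cap N}$ does the rest. The problem is the other direction. You reduce \textit{(i)}$\Rightarrow$\textit{(ii)} to the ``propagation step'' (if $R_{M,N}w=R_{N,M}w=P_{M\cap N}w=0$ then $P_{M\cap N}P_Mw=P_{M\cap N}P_Nw=0$) and then do not prove it: both strategies you sketch --- the comparison of the orbits of $w$ and $w-z$ fed through Proposition~\ref{prop:kConstStiles}, and the norming-functional reformulation --- are announced rather than carried out. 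So the proposal is incomplete at precisely the point you yourself flag as the main obstacle, and this is a genuine gap, not a routine detail.

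That said, your instinct about where the difficulty sits is accurate. The paper disposes of this direction with the chain
\[
P_{M\cap N}x=\lim_{m\to\infty}(P_MP_N)^mx=\lim_{m\to\infty}(P_MP_N)^{m-n}\bigl((P_MP_N)^nx\bigr)=P_{M\cap N}\bigl((P_MP_N)^nx\bigr),
\]
but the middle limit is by definition $R_{M,N}\bigl((P_MP_N)^nx\bigr)$, so the final equality is exactly assertion \textit{(i)} at the iterate $(P_MP_N)^nx$, which is not part of the hypothesis; the paper's proof thus silently assumes the same propagation you could not establish. Worse, the pointwise propagation appears to actually fail: in the example of Section~2, the point $v=(1,1,-10)$ lies in the cone $C$, so \textit{(i)} holds at $v$ with $P_{M\cap N}v=Pv=-\tfrac83(1,1,1)$; but $P_MP_Nv=(-4.5,-1.75,-1.75)$, and the first two coordinates of $P_MP_Nv+\tfrac83(1,1,1)$ have opposite signs, so the non-Euclidean branch of the norm applies and the derivative of $t\mapsto\|P_MP_Nv-t(1,1,1)\|^2$ does not vanish at $t=-\tfrac83$; hence $P_{M\cap N}(P_MP_Nv)\neq P_{M\cap N}v$ and \textit{(ii)} fails already at $n=1$. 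You should therefore not expect to close the gap as the statement stands. The implication is unproblematic in the global form (conditions holding for all $x$ simultaneously), which is all that is used in Theorem~\ref{thm:EquivAltMethod}, since there \textit{(i)} may legitimately be invoked at the iterates.
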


\begin{proof}
  First note that if condition~\textit{(i)} is satisfied we have
  \[
    P_{M\cap N} x= \lim_{m\to\infty} (P_MP_N)^mx = \lim_{m\to\infty} (P_MP_N)^{m-n} (P_MP_N)^nx = P_{M\cap N}(P_MP_N)^nx
  \]
  and similarly $P_{M\cap N} x = P_{M\cap N}(P_NP_M)^nx$, i.e. condition~\textit{(ii)} holds.

  Conversely, note that
  \[
    R_{M,N}x = P_{M\cap N}R_{M,N}x = \lim_{n\to\infty} P_{M\cap N} (P_MP_N)^nx = \lim_{n\to\infty} P_{M\cap N} x = P_{M\cap N} x
  \]
  by continuity of $P_{M\cap N}$. The argument for $R_{N,M}=P_{M\cap N}$ is analogous.
\end{proof}

\begin{remark}
  Since both $R_{M,N}$ and $P_{M\cap N}$ are continuous, they agree on a closed subset of $X$ which contains $M\cap N$.
\end{remark}

In the following we want to see whether we can say more about the set $A$ of points for which the alternating projection algorithm converges to the projection onto the intersection. 

\begin{proposition}
  If there are $z\in M\cap N$ and $\varepsilon>0$ such that $R_{M,N}x=P_{M\cap N}x$ for all $x \in z + \varepsilon B_X$, then $R_{M,N}x = P_{M\cap N}x$ for all $x\in X$.
\end{proposition}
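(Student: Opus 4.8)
The plan is to exploit two equivariance properties shared by $R_{M,N}$ and $P_{M\cap N}$: both commute with multiplication by scalars, and both are equivariant under translation by elements of $M\cap N$. For $R_{M,N}$ these are precisely the identities recorded in Theorem~\ref{thm:Retraction}. For $P_{M\cap N}$ the analogous facts, namely $P_{M\cap N}(\lambda x)=\lambda P_{M\cap N}x$ for $\lambda\in\R$ and $P_{M\cap N}(w+x)=w+P_{M\cap N}x$ for $w\in M\cap N$, follow directly from the definition of the metric projection onto a linear subspace together with the scaling and translation invariance of the distance function $d(\cdot,M\cap N)$; single-valuedness of $P_{M\cap N}$ is guaranteed by the standing assumption that $X$ is strictly convex and reflexive.

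First I would use translation equivariance to move the ball to the origin. For $x\in\varepsilon B_X$ we have $z+x\in z+\varepsilon B_X$, so the hypothesis gives $z+R_{M,N}x=R_{M,N}(z+x)=P_{M\cap N}(z+x)=z+P_{M\cap N}x$, and subtracting $z$ yields $R_{M,N}x=P_{M\cap N}x$ for every $x\in\varepsilon B_X$. Hence we may assume $z=0$ from now on.

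Next I would blow the ball up by homogeneity. Given an arbitrary $x\in X$: if $x=0$ both maps send it to $0$; if $x\neq 0$, put $\lambda=\varepsilon/(2\|x\|)>0$, so that $\lambda x\in\varepsilon B_X$. Then $\lambda R_{M,N}x=R_{M,N}(\lambda x)=P_{M\cap N}(\lambda x)=\lambda P_{M\cap N}x$, and dividing by $\lambda$ gives $R_{M,N}x=P_{M\cap N}x$. Since $x$ was arbitrary, this proves the claim.

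There is essentially no hard step here: the only points requiring (routine) care are recording the scaling and translation behaviour of $P_{M\cap N}$ and checking that the hypotheses on $X$ make $P_{M\cap N}$ a genuine single-valued continuous map, so that the algebraic manipulations above are legitimate. It is worth noting in a remark that the argument in fact shows that $R_{M,N}$ and $P_{M\cap N}$ agree on $X$ as soon as they agree on \emph{any} ball centred at a point of $M\cap N$, however small its radius.
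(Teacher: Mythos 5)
Your proof is correct and follows essentially the same route as the paper: both arguments rescale the given point into the ball $z+\varepsilon B_X$ and then transfer the hypothesis back using the homogeneity and $M\cap N$-translation equivariance of $R_{M,N}$ from Theorem~\ref{thm:Retraction} together with the same properties of $P_{M\cap N}$. The only difference is organisational --- you separate the translation step (reducing to $z=0$) from the scaling step, whereas the paper combines both in a single chain of equalities.
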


\begin{proof}
  Given $x\in X$ we set $y:=x-z$, $y' = \frac{\varepsilon}{2\|y\|} y$ and observe that
  \[
    y'\in \varepsilon B_X \qquad \text{and} \qquad x= z + \frac{2\|y\|}{\varepsilon} y'.
  \]
  Using Theorem~\ref{thm:Retraction}, we obtain that
  \begin{align*}
    R_{M,N}x &= R_{M,N}\left( z + \frac{2\|y\|}{\varepsilon} y'\right) = z + \frac{2\|y\|}{\varepsilon} R_{M,N} y'\\ & = \left(1-\frac{2\|y\|}{\varepsilon}\right) z + \frac{2\|y\|}{\varepsilon} R_{M,N}(z+y')= \left(1-\frac{2\|y\|}{\varepsilon}\right) z + \frac{2\|y\|}{\varepsilon} P_{M\cap N}(z+y')\\
    &= P_{M\cap N} (z+y) = P_{M\cap N}x,
  \end{align*}
  as claimed.
\end{proof}

\begin{remark}
  The previous results show that if the alternating projection method for two subspaces $M$ and $N$ does not converge to the metric projection onto the intersection $M\cap N$, there is no hope that it converges at least on a large set of points. Since the set where $R_{M,N}$ and $P_{M\cap N}$ agree is a closed set, convergence on a dense set already implies convergence everywhere. Moreover, by the previous result, if the set of points $x$ for which $R_{M,N}x=P_{M\cap N}x$ contains a ball intersecting $M\cap N$, it is already the whole space.

  Summing up, since the maps $P_{M\cap N}$ are homogeneous and additive with respect to elements of $M\cap N$ the set
  \[
    A = \{x\in X\colon R_{M,N}x=R_{N,M}x = P_{M\cap N} x \}
  \]
  is a closed set containing the subspace $M\cap N$ and the translates of a cone $C$ by every element of $M\cap N$. In other words the set $A$ is a closed set containing $(M\cap N) + C$ for the cone $C=(\ker P_{M\cap N}) \cap (\ker R_{M,N})\cap (\ker R_{N,M})$. 
\end{remark}

The next example shows that in general it is possible that both the sets $A$ above and its complement are somewhat large. 

\begin{example}
  We exhibit a simple example where both the set
  \[
    A = \{v\in X\colon R_{M,N}v=R_{N,M}v = P_{M\cap N} v \}
  \]
  and its complement have nonempty interior. Let $X$ be the space $\mathbb{R}^{3}$ equipped with the norm
  \[
    \|(x,y,z)\| =
    \begin{cases}
      \sqrt{x^2+y^2+z^2} & \text{if}\; xy \geq 0 \\
      \sqrt{\left(|x|^{3/2}+|y|^{3/2}\right)^{4/3} + z^2} & \text{if}\; xy < 0 \\
    \end{cases}
  \]
  which is obviously strictly convex on~$\mathbb{R}^{3}$. A direct computation shows that it is also continuously differentiable outside the origin, i.e. it is in particular a smooth norm. Moreover note $\|v\|_2\leq \|v\|$ for all $v\in\mathbb{R}^{3}$. In particular, if for a point $v\in X$ and its (Euclidean) orthogonal projection $u$ onto a subspace $L$ the first two entries of $v-u$ are non-negative, we have
  \[
    \|v-u\| = \|v-u\|_2 \leq \|v-w\|_2 \leq \|v-w\|
  \]
  for all $w\in L$ which implies that $P_Lv=u$. We consider the subspaces
  \[
    M = \mathrm{span}\left\{ \begin{pmatrix}1\\1\\1\end{pmatrix}, \begin{pmatrix}1\\0\\0\end{pmatrix} \right\} \qquad\text{and}\qquad N = \mathrm{span}\left\{ \begin{pmatrix}1\\1\\1\end{pmatrix}, \begin{pmatrix}0\\1\\0\end{pmatrix} \right\}.
  \]
  Note that the orthogonal projection of $v\in \mathbb{R}^{3}$ onto $M$ is given by
  \[
    w = v_1 \begin{pmatrix}1\\0\\0\end{pmatrix} + \frac{v_2+v_3}{2} \begin{pmatrix} 0\\1\\1 \end{pmatrix} = \begin{pmatrix} v_1\\ \frac{v_2+v_3}{2}\\\frac{v_2+v_3}{2}\end{pmatrix}
  \]
  and hence the set of points $v$ for which the first two entries of $v-w$ are non-negative contains the set
  \[
    Q:=\{(x,y,z)\in\mathbb{R}^{3}\colon x\geq 0, y\geq 0, z\leq 0\}.
  \]
  A similar computation shows that the same is true for~$N$. Since the projection onto a hyperplane is linear and $Q$ is three-dimensional, we may conclude that $P_M$ and $P_N$ coincides with the orthogonal projection onto $M$ and $N$, respectively. Moreover, since on $\mathbb{R}^{3}$ all norms are equivalent, von Neumann's results imply that $R_{M,N}=R_{N,M}=P$ where $P$ is the orthogonal projection onto the subspace $M\cap N$. A computation similar to the above one shows that on the set
  \[
    C:=\{(x,y,z)\in\mathbb{R}^{3}\colon x\geq 0, y\geq 0, z\leq -x-y\}.
  \]
  the metric projection onto $M\cap N$ coincides with the orthogonal projection. In particular on this cone, which has nonempty interior, the alternating algorithm converges to $P_{M\cap N}$. In order to show that also the set of points for which the alternating projection method does not converge to $P_{M\cap N}$ has nonempty interior, since this set is an open set, we only have to show that it is nonempty. For this aim consider $v=(-1,2,0)$ and note that the orthogonal projection is $R_{M,N}v= Pv= (1/3,1/3,1/3)$. Moreover, for the computation of the metric projection $P_{M\cap N}v$ note that for all $t\in(-1,2)$ the first entry of $v-t(1,1,1)$ is negative while the second one is positive. Observe that the derivative of $\|v-t(1,1,1)\|^2$ for $t\in(-1,1)$ is given as
  \[
    \frac{\mathrm{d}}{\mathrm{d}t} \|v-t(1,1,1)\|^2 = \frac{4}{3} \left(-\frac{3 (-t - 1)}{2 \sqrt{1+t}} - \frac{3 (2 - t)}{2\sqrt{2 - t}}\right) \left((1+t)^{3/2} + (2 - t)^{3/2}\right)^{1/3} + 2 t
  \]
  which does not vanish at $1/3$ but at a smaller value for $t$ which is $t_0 \approx 0.28$. Since for this $t_0$ we obtain $\|v-t_0(1,1,1)\|^2\approx 5.8$ which is smaller than both $\|v-(-1)(1,1,1)\|_2^2 =10$ and $\|v-2(1,1,1)\|_2^2 =13$, we conclude that $P_{M\cap N}v\neq R_{M,N}v$.  
\end{example}

We conclude this section with a characterisation of the pairs of subspaces $M$ and $N$ for which the alternating projection method converges for all initial points to the projection onto the intersection. For the next theorem, recall that, following~\cite{Sullivan}, a $B$-operator is a mapping $P\colon X\to X$ satisfying the following two conditions:
\begin{enumerate}[(i)]
\item $\|x-Px\|\leq \|x\|$ for all $x\in X$.
\item $\|x-Px\| = \|x\|$ if and only if $Px=0$.
\end{enumerate}

We can now give the following characterisation of when $R_{M,N} = R_{N,M} = P_{M\cap N}$.

\begin{theorem}\label{thm:EquivAltMethod}
  The following assertions are equivalent.
  \begin{enumerate}[(i)]
  \item $R_{M,N}=R_{N,M}=P_{M\cap N}$
  \item $P_{M\cap N} P_M = P_{M\cap N}$ and $P_{M\cap N} P_N = P_{M\cap N}$.
  \item $P_M(\ker P_{M\cap N}) \subset \ker P_{M\cap N}$ and $P_N(\ker P_{M\cap N}) \subset \ker P_{M\cap N}$
  \item $R_{M,N}$ and $R_{N,M}$ are $B$-operators.
  \end{enumerate}
\end{theorem}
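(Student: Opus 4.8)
The plan is to prove the cycle of equivalences (i)$\Leftrightarrow$(ii), (ii)$\Leftrightarrow$(iii) and (i)$\Leftrightarrow$(iv); each single implication is short once the correct tool from the preceding material is invoked, and only the implication (iv)$\Rightarrow$(i) requires an actual idea.

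For (i)$\Leftrightarrow$(ii) I would use the identities $R_{N,M}=R_{M,N}P_M$ and $R_{M,N}=R_{N,M}P_N$ recorded in the remark following Theorem~\ref{thm:Retraction}. Under (i) these read $P_{M\cap N}=P_{M\cap N}P_M$ and $P_{M\cap N}=P_{M\cap N}P_N$, which is (ii). Conversely, from the operator identities in (ii) one gets by induction $P_{M\cap N}(P_MP_N)^n=P_{M\cap N}$ and $P_{M\cap N}(P_NP_M)^n=P_{M\cap N}$ for every $n$, and applying Proposition~\ref{prop:ptwEquiv} at each point $x\in X$ yields (i).

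For (ii)$\Leftrightarrow$(iii): if (ii) holds and $x\in\ker P_{M\cap N}$ then $P_{M\cap N}(P_Mx)=P_{M\cap N}x=0$, so $P_Mx\in\ker P_{M\cap N}$, and likewise for $P_N$, giving (iii). Conversely, given (iii) and arbitrary $x\in X$, write $x=z+w$ with $z:=P_{M\cap N}x\in M\cap N$ and $w:=x-z$; additivity of $P_{M\cap N}$ with respect to elements of $M\cap N$ gives $P_{M\cap N}w=0$, and the analogous additivity of $P_M$ with respect to elements of $M$ gives $P_Mx=z+P_Mw$, so that $P_{M\cap N}(P_Mx)=z+P_{M\cap N}(P_Mw)=z=P_{M\cap N}x$ using $P_Mw\in\ker P_{M\cap N}$ from (iii); the same computation with $N$ in place of $M$ gives (ii).

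For (i)$\Leftrightarrow$(iv): the implication (i)$\Rightarrow$(iv) is a direct verification, since $R_{M,N}=R_{N,M}=P_{M\cap N}$ is then a metric projection onto a subspace — the estimate $\|x-P_{M\cap N}x\|=d(x,M\cap N)\le\|x\|$ holds because $0\in M\cap N$, and equality forces $0$ to be a best approximation of $x$, hence $P_{M\cap N}x=0$ by strict convexity. The substantive direction is (iv)$\Rightarrow$(i): I would fix $x\in X$, set $z:=P_{M\cap N}x$ and $w:=x-z$ so that $\|w\|=d(x,M\cap N)$, and use the translation property of $R_{M,N}$ from Theorem~\ref{thm:Retraction} in the form $R_{M,N}x=z+R_{M,N}w$, which gives $x-R_{M,N}x=w-R_{M,N}w$. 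The first $B$-operator condition for $R_{M,N}$ then reads $\|x-R_{M,N}x\|=\|w-R_{M,N}w\|\le\|w\|=d(x,M\cap N)$, while $R_{M,N}x\in M\cap N$ gives the reverse inequality; hence $R_{M,N}x$ realises the distance to $M\cap N$ and therefore equals $P_{M\cap N}x$ by uniqueness of best approximations in a strictly convex space. Running the identical argument for $R_{N,M}$ yields $R_{N,M}=P_{M\cap N}$, so (i) holds. I expect this last reduction to be the only delicate point: the translation property is exactly what promotes the ``$B$-operator inequality at the origin'' to the global statement that $R_{M,N}x$ is a nearest point in $M\cap N$, after which strict convexity closes the argument — and it is worth noting that only the first of the two $B$-operator conditions is actually used, the second being recovered a posteriori from (i).
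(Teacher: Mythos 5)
Your proposal is correct. The equivalences (i)$\Leftrightarrow$(ii) and (ii)$\Leftrightarrow$(iii) follow essentially the paper's route: the former via Proposition~\ref{prop:ptwEquiv} (you additionally invoke the intertwining identities $R_{N,M}=R_{M,N}P_M$, $R_{M,N}=R_{N,M}P_N$ from the remark for the forward direction, which is a legitimate way to make the paper's ``direct consequence'' explicit), and the latter via exactly the decomposition $x=P_{M\cap N}x+(x-P_{M\cap N}x)$ together with the additivity of $P_{M\cap N}$ and $P_M$ with respect to elements of the relevant subspaces. Where you genuinely diverge is (i)$\Leftrightarrow$(iv): the paper disposes of this by citing Theorem~1 of Sullivan, whereas you give a short self-contained argument --- for (i)$\Rightarrow$(iv) the observation that a metric projection onto a subspace is a $B$-operator by strict convexity, and for (iv)$\Rightarrow$(i) the reduction, via the translation property $R_{M,N}(z+w)=z+R_{M,N}w$ from Theorem~\ref{thm:Retraction}, of the global statement to the inequality $\|w-R_{M,N}w\|\le\|w\|$ at $w=x-P_{M\cap N}x$, which combined with $R_{M,N}x\in M\cap N$ and uniqueness of best approximations forces $R_{M,N}x=P_{M\cap N}x$. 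This argument is sound (your remark that only the first $B$-operator axiom is needed for this direction is accurate) and has the advantage of keeping the theorem self-contained rather than resting on an external result; the paper's citation buys brevity and situates the statement within Sullivan's more general framework of best approximation operators.
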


\begin{proof}
  The equivalence of~\textit{(i)} and~\textit{(ii)} is a direct consequence of Proposition~\ref{prop:ptwEquiv}.

  That~\textit{(ii)} implies~\textit{(iii)} is obvious. For the converse implication note that
  \[
    P_{M\cap N}P_Mx - P_{M\cap N}x = P_{M\cap N}(P_Mx - P_{M\cap N}x) = P_{M\cap N}(P_M(x-P_{M\cap N}x)) = 0
  \]
  since $P_{M\cap N}x\in M\cap N \subset M$. Using a similar computation for $P_N$ finishes the proof of the implication \textit{(iii)}$\Rightarrow$\textit{(ii)}.

  The equivalence of~\textit{(i)} and~\textit{(iv)} is a direct consequence of Theorem~1 in~\cite[p.~248]{Sullivan}.
\end{proof}

\begin{corollary}
  If $P_{M\cap N}$ is linear, we have $R_{M,N} = R_{N,M} = P_{M\cap N}$.
\end{corollary}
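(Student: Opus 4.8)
The plan is to reduce the statement to condition~\textit{(iii)} of Theorem~\ref{thm:EquivAltMethod}, so that it suffices to show $P_M(\ker P_{M\cap N})\subset\ker P_{M\cap N}$ and $P_N(\ker P_{M\cap N})\subset\ker P_{M\cap N}$. By the symmetry between $M$ and $N$ I only need to treat $P_M$. So I would fix $y\in\ker P_{M\cap N}$, set $u:=P_My$ and $a:=y-u$ (so that $\|a\|=d(y,M)$), and aim to prove $P_{M\cap N}u=0$.

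The key observation is that linearity of $P_{M\cap N}$ lets me transport the projection off $u$ and onto $a$: since $u=y-a$ and $P_{M\cap N}y=0$, linearity gives $w:=P_{M\cap N}u=-P_{M\cap N}a$. Consequently $a+w=a-P_{M\cap N}a$ is exactly the residual of the metric projection of $a$ onto $M\cap N$, and since $0\in M\cap N$ this yields $\|a+w\|=d(a,M\cap N)\le\|a\|$.

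Now I would compare the two points $u$ and $u-w$ of $M$ (note $w\in M\cap N\subset M$): their distances to $y$ satisfy $\|y-(u-w)\|=\|a+w\|\le\|a\|=\|y-u\|=d(y,M)$, while the reverse inequality $\|y-(u-w)\|\ge d(y,M)$ holds because $u-w\in M$. Hence both $u$ and $u-w$ realise the distance from $y$ to the subspace $M$, and strict convexity of $X$ forces the metric projection onto $M$ to be single-valued, so $u-w=u$, i.e. $w=P_{M\cap N}u=0$. This gives $P_M(\ker P_{M\cap N})\subset\ker P_{M\cap N}$; running the identical argument with $N$ in place of $M$ gives $P_N(\ker P_{M\cap N})\subset\ker P_{M\cap N}$, and Theorem~\ref{thm:EquivAltMethod} then delivers $R_{M,N}=R_{N,M}=P_{M\cap N}$.

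I do not expect a real obstacle here: the one genuinely new idea is noticing that linearity permits moving $P_{M\cap N}$ from $u$ to $a$, after which everything rests on the defining minimality property of the metric projection and on uniqueness coming from strict convexity. The only step requiring a word of justification is the single-valuedness of the metric projection onto the subspace $M$, which is guaranteed by the standing hypothesis that $X$ is finite-dimensional and strictly convex.
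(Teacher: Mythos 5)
Your proof is correct. Both you and the paper reduce the statement to condition~\emph{(iii)} of Theorem~\ref{thm:EquivAltMethod}, and both start from the decomposition $y=P_My+(y-P_My)$, but the key inclusion $P_M(\ker P_{M\cap N})\subset\ker P_{M\cap N}$ is verified differently. The paper observes that the residual $y-P_My$ lies in $\ker P_M$ and that $\ker P_M\subset\ker P_{M\cap N}$ (because $M\cap N\subset M$, so a point whose best approximant in $M$ is $0$ also has best approximant $0$ in $M\cap N$); it then only needs the fact that $\ker P_{M\cap N}$ is a \emph{linear subspace} to conclude $P_My=y+(P_My-y)\in\ker P_{M\cap N}$. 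You instead use the full additivity of the operator $P_{M\cap N}$ to transport the projection onto the residual, $P_{M\cap N}(P_My)=-P_{M\cap N}(y-P_My)$, and then rule out a nonzero value by exhibiting a second minimiser $u-w\in M$ and invoking strict convexity. Your route trades the nested-kernel inclusion for a best-approximation comparison plus uniqueness; the paper's route is slightly more economical in that it uses only the linearity of the kernel (which, by Lemma~\ref{lem:LinNullset}, is anyway equivalent to linearity of the projection in this setting) and needs no separate appeal to strict convexity beyond the single-valuedness of $P_M$ that both arguments presuppose.
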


\begin{proof}
  If $P_{M\cap N}$ is linear, the set $\ker P_{M\cap N}$ is a linear subspace. For $z\in \ker P_{M\cap N}$ we have
  \[
    P_M z = z + (P_Mz-z) \qquad\text{and}\qquad P_Nz = z + (P_Nz-z).
  \]
  Note that for $x\in X$ we have $P_Mx=0$ if and only if $\|x-y\|\geq \|x\|$ for all $y\in M$ and hence also for all $y\in M\cap N$ which is equivalent to $P_{M\cap N}x=0$. Hence $\ker P_M\subset \ker P_{M\cap N}$ and a similar argument shows that $\ker P_N\subset \ker P_{M\cap N}$.

  Since $P_Mz-z\in \ker P_M \subset \ker P_{M\cap N}$ and $P_Nz-z\in \ker P_N \subset \ker P_{M\cap N}$, we may deduce from the assertion that $\ker P_{M\cap N}$ is a linear subspace, that $P_Mz, P_Nz\in \ker P_{M\cap N}$. Now the claim follows from Theorem~\ref{thm:EquivAltMethod}.
\end{proof}

\section{Characterisation of the subspaces of $\ell_p^n$ with linear metric projection}
An extended version of these results, including more detailed proofs, can be found in the second author's master thesis~\cite{masterthesis}.

\begin{lemma}
  Let $X$ be a uniformly convex space and $M\subset X$ a closed subspace with metric projection~$P_M$. The mapping $Q:=\Id-P_M$ is a continuous retraction onto $\ker(P_M)$ which satisfies $\ker(Q)=M$.
\end{lemma}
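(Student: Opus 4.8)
The plan is to verify the three asserted properties in turn --- continuity, being a retraction onto $\ker(P_M)$, and $\ker(Q)=M$ --- each of which reduces quickly to the nearest-point characterisation of the metric projection. First I would record that a uniformly convex space is in particular strictly convex and reflexive, so that $P_M$ is well-defined and single-valued, and that $P_M$ is continuous by the regularity results already recalled in the introduction (see~\cite{HolmesKripke, Wulbert}); hence $Q=\Id-P_M$ is continuous as a difference of continuous maps.

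The main point is the inclusion $Q(X)\subseteq\ker(P_M)$. Fix $x\in X$ and put $y:=Qx=x-P_Mx$. For every $m\in M$ we have $P_Mx+m\in M$, so by definition of $P_Mx$,
\[
  \|y-m\|=\|x-(P_Mx+m)\|\ge d(x,M)=\|x-P_Mx\|=\|y-0\|.
\]
This says that $0$ realises the distance from $y$ to $M$, and by uniqueness of the metric projection in the strictly convex space $X$ we conclude $P_M(Qx)=P_M(x-P_Mx)=0$, i.e.\ $Qx\in\ker(P_M)$.

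For the retraction property, if $z\in\ker(P_M)$ then $Qz=z-P_Mz=z$, so $Q$ restricts to the identity on $\ker(P_M)$ and, together with the previous paragraph, is a continuous retraction onto $\ker(P_M)$. Finally, $Qx=0$ is equivalent to $x=P_Mx$; since $P_Mx\in M$ always holds while $P_Mx=x$ for every $x\in M$ (the nearest point of $M$ to a point of $M$ being the point itself), this is in turn equivalent to $x\in M$, giving $\ker(Q)=M$. I do not anticipate any real obstacle: the only step carrying content is the identity $P_M(x-P_Mx)=0$, and that is immediate from the displayed inequality once single-valuedness of $P_M$ is available.
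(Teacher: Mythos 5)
Your proof is correct and follows essentially the same route as the paper's: both hinge on the identity $P_M(x-P_Mx)=0$, which the paper uses silently in computing $QQx=Qx$ and which you justify explicitly via the translation $P_Mx+m\in M$. Your version merely supplies that omitted detail; no substantive difference.
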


\begin{proof}
  In order to see that $Q$ is a retraction onto $\ker(P_M)$ observe that
  \[
    QQx=(\Id-P_M)(x-P_Mx)=x-P_Mx-P_M(x-P_Mx)=x-P_Mx=Qx 
  \]
  and $Qx = x$ if and only if $P_Mx=0$. Since $P_Mx=x$ happens precisely for $x\in M$, we have $\ker(Q)=M$. 
\end{proof}

\begin{lemma}\label{lpn_kernel}
  Let $L\subset\ell_p^n$, $p>1$ and $n\geq 2$, be a one dimensional subspace spanned by the vector $a=(a_1,\ldots,a_n)$. Then, 
  \[
    \ker(P_L)= \Big\{x\in\R^n\colon \sum_{i=1}^na_i\sign(x_i)|x_i|^{p-1}=0\Big\}.
  \]
\end{lemma}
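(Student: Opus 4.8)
The plan is to reduce the statement to an elementary one-variable convex optimisation problem. By definition $x\in\ker(P_L)$ means $P_Lx=0$, i.e.\ that the origin is the best approximation of $x$ in $L=\R a$; since raising to the $p$-th power does not change the minimiser, this is equivalent to saying that the (coercive, because $a\neq0$) function
\[
  f\colon\R\to\R,\qquad f(t):=\|x-ta\|_p^p=\sum_{i=1}^n|x_i-ta_i|^p
\]
attains its global minimum at $t=0$.

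The next step is to differentiate. The point where the hypothesis $p>1$ enters is that $s\mapsto|s|^p$ is continuously differentiable on all of $\R$, with derivative $s\mapsto p\,\sign(s)|s|^{p-1}$; note that this derivative vanishes at $s=0$, in accordance with the convention $\sign(0)=0$. Consequently $f$ is differentiable with
\[
  f'(t)=-p\sum_{i=1}^n a_i\,\sign(x_i-ta_i)\,|x_i-ta_i|^{p-1},
\]
and $f$ is convex, being a sum of convex functions — in fact strictly convex, since $a\neq0$, which is also consistent with the single-valuedness of the metric projection in the uniformly convex space $\ell_p^n$. Because a differentiable convex function on $\R$ attains its global minimum precisely at the zeros of its derivative, $t=0$ is a minimiser of $f$ if and only if $f'(0)=0$, that is, if and only if $\sum_{i=1}^n a_i\,\sign(x_i)|x_i|^{p-1}=0$. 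This is exactly the asserted description of $\ker(P_L)$.

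I do not expect a serious obstacle: the only points that need a little care are the differentiability of $t\mapsto|x_i-ta_i|^p$ at the parameters $t$ with $x_i=ta_i$ (handled uniformly by $p>1$) and checking that the convention $\sign(0)=0$ makes the displayed set coincide with $\{x\colon f'(0)=0\}$ when some coordinate of $x$ vanishes. If one preferred to avoid invoking smoothness of the norm directly, the same conclusion would follow by computing the one-sided derivatives of $f$ at $0$ and using the characterisation of minimisers of a convex function on $\R$ in terms of the signs of its one-sided derivatives.
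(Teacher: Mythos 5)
Your argument is correct and coincides with the paper's own proof: both reduce membership in $\ker(P_L)$ to the condition that the differentiable convex function $t\mapsto\|x-ta\|_p^p$ has a global minimum at $t=0$, characterised by the vanishing of its derivative there. You merely spell out the computation of $f'(0)$ and the differentiability of $s\mapsto|s|^p$ in more detail than the paper does.
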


\begin{proof}
  By definition of the metric projection we have $P_L(x)=0$ if and only if
  \[
    \|x\|\leq \|x-\alpha a\| \qquad\text{for all}\quad  \alpha\in\mathbb{R}.
  \]
  Since the mapping $t\mapsto t^p$ is increasing for $t\geq 0$, this is equivalent to
  \[
    \|x\|^p \leq \|x-\alpha a\|^p \qquad\text{for all}\quad \alpha\in\mathbb{R}.
  \]
  Since the mapping defined by $f(\alpha)=\|x-\alpha a\|^p$ is a differentiable convex function, its minimum is characterised by $f'(\alpha)=0$. Computing this derivative results in the claimed characterisation.
\end{proof}

The following lemma is well-known, but since its proof is rather easy we include it for the convenience of the reader.

\begin{lemma}\label{lem:LinNullset}
  A metric projection $P$ onto a linear subspace $A$ of a uniformly convex Banach space $X$ is linear if and only if $\ker(P)$ is a linear subspace of $X$.
\end{lemma}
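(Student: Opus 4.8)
The plan is to observe that the forward implication is trivial and that the content lies in the converse, which I would prove by identifying $P$ with the algebraic projection associated with a direct sum decomposition of $X$.

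For the forward direction: if $P$ is linear, then $\ker(P)$ is a linear subspace simply as the kernel of a linear map. For the converse, I would first record two elementary properties of the metric projection onto a linear subspace that hold whenever $P$ is single-valued: it is homogeneous, $P(\lambda x)=\lambda P(x)$ for all $\lambda\in\R$, and it satisfies $P(a+x)=a+P(x)$ for every $a\in A$. Both follow because the map $y\mapsto\lambda y$ (respectively $y\mapsto y+a$) is a bijection of $A$ that rescales (respectively preserves) the distances $\|x-y\|$, so it transports the minimiser accordingly. In particular $P(a)=a$ for $a\in A$ and $P(k)=0$ for $k\in\ker(P)$.

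Next I would use the preceding lemma (that $\Id-P$ is a retraction onto $\ker(P)$): for every $x\in X$ we have $x-P(x)\in\ker(P)$, so the decomposition $x=P(x)+(x-P(x))$ shows $X=A+\ker(P)$; and if $v\in A\cap\ker(P)$ then $v=P(v)=0$, so the sum is direct, $X=A\oplus\ker(P)$ as an algebraic direct sum. Now assume $\ker(P)$ is a linear subspace. Writing $x=a+k$ with $a\in A$, $k\in\ker(P)$, the homogeneity and $A$-equivariance give $P(x)=P(a+k)=a+P(k)=a$, so $P$ is exactly the linear projection of $X$ onto $A$ along the complementary subspace $\ker(P)$. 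Concretely, for $x_i=a_i+k_i$ and scalars $\lambda_i$ the vector $\lambda_1 k_1+\lambda_2 k_2$ lies in $\ker(P)$, hence $P(\lambda_1 x_1+\lambda_2 x_2)=\lambda_1 a_1+\lambda_2 a_2=\lambda_1 P(x_1)+\lambda_2 P(x_2)$, which is the asserted linearity.

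I do not expect a genuine obstacle here; the only points needing a little care are the homogeneity and $A$-equivariance of the metric projection (needed to evaluate $P$ on the two summands) and the fact that $X=A\oplus\ker(P)$ is a true direct sum, which rests on the preceding lemma together with $A\cap\ker(P)=\{0\}$. Uniform convexity enters only to ensure that $P$ is well-defined as a single-valued map and that the preceding lemma applies.
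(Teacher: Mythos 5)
Your proposal is correct and follows essentially the same route as the paper: both decompose $X=A\oplus\ker(P)$ via $x=Px+(x-Px)$ and then use the translation property $P(a+k)=a+P(k)=a$ for $a\in A$, $k\in\ker(P)$, together with linearity of $A$ and of $\ker(P)$, to conclude that $P$ is the linear projection onto $A$ along $\ker(P)$. Your version just makes the direct-sum decomposition and the equivariance properties explicit where the paper leaves them implicit.
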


\begin{proof}
  We only have to show that $P$ is linear if $\ker(P)$ is a linear subspace. Assume that both $A$ and $\ker(P)=(\Id-P)[X]$ are linear subspaces of $X$. Given $x,y\in X$,$\lambda\in\R$, we observe that
  \[
    \lambda x+y=\lambda Px+Py+\lambda(\Id-P)x+(\Id-P)y=Pz_1+(\Id-P)z_2
  \]
  for some $z_1,z_2\in X$ and hence
  \[
    P(\lambda x+y) = P(Pz_1+z_2-Pz_2) = Pz_1+P(z_2)-Pz_2 = Pz_1=\lambda Px+Py,
  \]
  i.e. $P$ is a linear projection.
\end{proof}

\begin{lemma}\label{oplus_linear}
  Let $X$ be a uniformly convex and uniformly smooth Banach space and $A,B\subset X$ be linear subspaces where $A+B$ is closed. The metric projection $P_{A+ B}$ is linear if and only if $\ker(P_A)\cap \ker(P_B)$ is a linear subspace of $X$. In particular, if $P_A$ and $P_B$ are linear, $P_{A+ B}$ is linear.
\end{lemma}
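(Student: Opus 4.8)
The plan is to reduce the statement to Lemma~\ref{lem:LinNullset} by computing the kernel of $P_{A+B}$ explicitly. Concretely, I claim that
\[
  \ker(P_{A+B}) = \ker(P_A)\cap\ker(P_B),
\]
\emph{without} any linearity assumption on the projections. Granting this, the first assertion is immediate: by Lemma~\ref{lem:LinNullset}, applied to the closed subspace $A+B$, the projection $P_{A+B}$ is linear exactly when $\ker(P_{A+B})$ is a linear subspace, which by the displayed identity is the same as $\ker(P_A)\cap\ker(P_B)$ being a linear subspace. The ``in particular'' clause then follows, since if $P_A$ and $P_B$ are linear, Lemma~\ref{lem:LinNullset} makes $\ker(P_A)$ and $\ker(P_B)$ linear subspaces, hence so is their intersection.

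So the work is in the displayed identity. The inclusion ``$\subseteq$'' is trivial: $A\subseteq A+B$ and $B\subseteq A+B$, so $P_{A+B}x=0$ forces $\|x\|\le\|x-a\|$ and $\|x\|\le\|x-b\|$ for all $a\in A$, $b\in B$, i.e.\ $x\in\ker(P_A)\cap\ker(P_B)$. For the reverse inclusion I would use the single-valued duality map. Since $X$ is uniformly smooth, the norm is Fréchet differentiable off the origin, so $y\mapsto\tfrac12\|y\|^2$ is differentiable on all of $X$ with derivative $J(y)$ at $y$, where $J(y)\in X^*$ satisfies $\langle J(y),y\rangle=\|y\|^2$ and $\|J(y)\|=\|y\|$ (and $J(0)=0$). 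For a closed subspace $M$ and $x\neq 0$, the function $m\mapsto\|x-m\|^2$ on $M$ is convex and, as $x\neq 0$, differentiable at $m=0$; and $P_Mx=0$ means precisely that this function attains its minimum at $m=0$. Since a convex function differentiable at a point is minimised there exactly when its derivative vanishes, this is equivalent to $\langle J(x),m\rangle=0$ for all $m\in M$, that is, $J(x)\in M^{\perp}$. (For $x=0$ both sides hold trivially.) Hence
\[
  \ker(P_M)=\{x\in X\colon J(x)\in M^{\perp}\}.
\]

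It remains to combine these. Applying the last identity to $M=A$, $M=B$, and $M=A+B$, and using that a functional annihilates $A+B$ if and only if it annihilates both $A$ and $B$, i.e.\ $(A+B)^{\perp}=A^{\perp}\cap B^{\perp}$, we obtain
\[
  \ker(P_A)\cap\ker(P_B)=\{x\colon J(x)\in A^{\perp}\cap B^{\perp}\}=\{x\colon J(x)\in(A+B)^{\perp}\}=\ker(P_{A+B}),
\]
which is the claimed identity. I expect the only genuine subtlety to be the variational characterisation of $\ker(P_M)$: smoothness of the norm must be invoked to obtain a single-valued duality map, so that ``the derivative vanishes on $M$'' is a well-posed condition. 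The hypothesis that $A+B$ be closed is used only to guarantee that $P_{A+B}$ is a well-defined, single-valued, continuous metric projection, and uniform convexity is what makes all three metric projections single-valued in the first place.
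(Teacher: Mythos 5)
Your proof is correct, and it reaches the same reduction as the paper --- establish $\ker(P_{A+B})=\ker(P_A)\cap\ker(P_B)$ and then invoke Lemma~\ref{lem:LinNullset} --- but by a genuinely different route for the nontrivial inclusion. The paper obtains $\ker(P_A)\cap\ker(P_B)\subset\ker(P_{A+B})$ by citing Stiles' theorem that $((\Id-P_A)(\Id-P_B))^n$ converges pointwise to $\Id-P_{A+B}$, and then proves the reverse inclusion by the same elementary observation you use ($0\in A\cap B$, so $P_{A+B}x=0$ forces $P_Ax=P_Bx=0$). You instead characterise $\ker(P_M)$ via the single-valued duality map, $\ker(P_M)=\{x\colon J(x)\in M^{\perp}\}$, and conclude from $(A+B)^{\perp}=A^{\perp}\cap B^{\perp}$; this gives both inclusions simultaneously. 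Your variational step is sound: uniform smoothness makes $J$ single-valued, the map $m\mapsto\|x-m\|^2$ is convex and Gâteaux differentiable on $M$ with derivative $-2\langle J(x),\cdot\rangle$ at $m=0$, and a convex differentiable function is globally minimised exactly where its derivative vanishes (no existence of a minimiser is needed for this equivalence). What your approach buys is self-containedness --- no appeal to the convergence of the alternating residual iteration --- and it is in fact the same mechanism the paper uses later in the concrete setting of Lemma~\ref{dualityofAorthogonal}, where $\ker P_A=j_q(A^{\perp})$ in $\ell_p^n$. What the paper's citation buys is brevity and a pointer to a classical source; the mathematical content is equivalent.
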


\begin{proof}
  By the main theorem of~\cite[p.~117]{MR188759}, the sequence $((\Id-P_A)(\Id-P_B))^n$ of operators converges pointwise to the mapping $\Id-P_{A+ B}$. In particular, we obtain that $x=x-P_{A+B}x$ for all points $x\in \ker(P_A)\cap \ker(P_B)$. Hence,
  \[
    \ker(P_A)\cap \ker(P_B) \subset \ker(P_{A+B}).
  \]
  On the other hand, the condition $P_{A+ B}x=0$ is characterised by $\|x-(a+b)\|\geq\|x\|$ for all points $a\in A$ and $b\in B$. Since $0\in A\cap B$, we may conclude that 
  \[
    \|x-a\|\geqslant\|x\|\qquad\text{and}\qquad \|x-b\|\geqslant\|x\|
  \]
  for all $a\in A$ and all $b\in B$. In other words, we have $P_Ax=P_Bx=0$. Summing up, we have shown that $\ker(P_{A+B}) = \ker(P_A)\cap \ker(P_B)$. Now the claim follows from Lemma~\ref{lem:LinNullset}.
\end{proof}

\begin{theorem}\label{thm:MainThmLin}
  Let $L$ be a non-trivial subspace of $\ell_p^n=(\R^n,\|\cdot\|_p)$ with $p\in(1,\infty)\setminus\{2\}$, $n\geqslant2$. The metric projection $P_L$ is linear if and only if $L$ is of the form
  \[
    \bigoplus_{k=1}^d\R (e_{i_k}+\lambda_k e_{j_k}) \quad \text{with} \quad  d\in\{1,\dotsc,n\},\; i_k,j_k\in\{1,\dotsc,n\}\;\lambda_k\in\R.
  \]
  In other words the projection is linear if and only if the subspace is spanned by vectors with at most two nonzero entries.
\end{theorem}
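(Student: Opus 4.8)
The plan is to prove both implications. For the "if" direction, suppose $L$ is spanned by vectors each having at most two nonzero entries. The building blocks are the one-dimensional subspaces $\R(e_{i}+\lambda e_{j})$, and the key claim is that each of these has linear metric projection in $\ell_p^n$. By Lemma~\ref{lem:LinNullset} it suffices to check that $\ker(P_L)$ is a linear subspace, and here Lemma~\ref{lpn_kernel} is exactly the tool: for $L=\R(e_i+\lambda e_j)$ the kernel is $\{x\colon \sign(x_i)|x_i|^{p-1}+\lambda\,\sign(x_j)|x_j|^{p-1}=0\}$. Since this condition involves only the two coordinates $x_i,x_j$ and is invariant under scaling and (a short computation) closed under addition — indeed it says the point lies in a coordinate hyperplane-like set that, restricted to the $(x_i,x_j)$-plane, is a genuine line through the origin, while the remaining coordinates are free — the kernel is a linear subspace, so $P_{\R(e_i+\lambda e_j)}$ is linear. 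One then assembles $L$ from these pieces using Lemma~\ref{oplus_linear}: writing $L=\bigoplus_{k=1}^d \R(e_{i_k}+\lambda_k e_{j_k})$ and applying the lemma inductively (the partial sums are closed since everything is finite dimensional), linearity of each summand's projection yields linearity of $P_L$.

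For the "only if" direction, suppose $P_L$ is linear; I must show every vector in $L$ has at most two nonzero entries, or rather that $L$ admits a basis of such vectors. Again by Lemma~\ref{lem:LinNullset}, $\ker(P_L)$ is a linear subspace of dimension $n-\dim L$. The strategy is to extract strong structural information from the defining equations of $\ker(P_L)$. Since $\ker(P_L)=L^{\perp}$ in the duality sense — concretely, $x\in\ker(P_L)$ iff the functional $y\mapsto\sum \sign(x_i)|x_i|^{p-1} y_i$ (the support functional at $x$, up to normalisation) annihilates $L$ — the condition that this set be a linear subspace is extremely restrictive because the map $x\mapsto(\sign(x_i)|x_i|^{p-1})_i$ is nonlinear for $p\neq 2$. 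The plan is: pick a basis $a^{(1)},\dots,a^{(m)}$ of $\ker(P_L)$; then for every $x$ in their span, the "dual" vector $\phi(x):=(\sign(x_i)|x_i|^{p-1})_i$ must be orthogonal to $L$, i.e. must lie in the fixed $d$-dimensional space $L^{\perp}$ (ordinary orthogonal complement). Differentiating, or comparing $\phi$ at generic points versus at points where some coordinate vanishes, should force the basis vectors of $\ker(P_L)$ to have disjoint or highly constrained supports, and a combinatorial/linear-algebra argument then transfers this to $L$ itself, showing $L$ has the claimed form.

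The main obstacle will be the "only if" direction: making rigorous the jump from "$\{x\colon \phi(x)\perp L\}$ is a linear subspace" to "the supports are constrained." The natural approach is to argue locally: on the open dense set where all coordinates of a basis vector $a^{(k)}$ are nonzero, $\phi$ is smooth, and one can compute the Jacobian $D\phi(x)=\mathrm{diag}\big((p-1)|x_i|^{p-2}\big)$, which is an invertible diagonal matrix with entries depending genuinely on $x$. The requirement that $\phi$ map the subspace $\ker(P_L)$ into the subspace $L^{\perp}$ means $D\phi(x)$ must carry the tangent space $\ker(P_L)$ into $L^{\perp}$ at every such $x$; since the diagonal entries vary independently while $\ker(P_L)$ and $L^{\perp}$ are fixed, this is only possible if $\ker(P_L)$ is spanned by standard basis vectors $e_i$ and vectors supported on pairs $\{i,j\}$ on which the ratio $|x_i|^{p-2}/|x_j|^{p-2}$ is forced constant — which for $p\neq 2$ forces $|x_i|=|x_j|$ along that subspace, pinning down the two-dimensional structure. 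I would carry this out by case analysis on $\dim\ker(P_L)$ or by a clean eigenvector argument on the commutation relation between the diagonal matrices and the projection onto $\ker(P_L)$, then dualise to get the statement for $L$. The remaining steps — handling boundary points where coordinates vanish, and the final bookkeeping that expresses $L$ as the claimed direct sum — are routine once the support constraint is in hand.
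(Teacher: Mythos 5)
Your ``if'' direction is sound and follows the paper's route: you verify directly that $\ker P_{\R(e_i+\lambda e_j)}=\{x\colon x_i=-\sign(\lambda)\,|\lambda|^{1/(p-1)}x_j\}$ is a hyperplane (the paper instead imports this as Deutsch's Lemma~\ref{lem:DeutschOneDim}) and then assemble $L$ with Lemma~\ref{oplus_linear}, exactly as in the paper.

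The ``only if'' direction, however, is a plan rather than a proof, and the plan has concrete gaps. First, the intermediate conclusion you state is false as written: for $L=\R(e_1+2e_2)\subset\ell_p^3$ the kernel is spanned by $(-2^{1/(p-1)},1,0)$ and $e_3$, so on the two-coordinate block one has $|x_1|\neq|x_2|$; the Jacobian-invariance argument can at best force the \emph{ratio} $|x_i|/|x_j|$ to be constant on $\ker(P_L)$, not equal to one. Second, and more seriously, the closing step ``dualise to get the statement for $L$'' is not routine: being spanned by vectors with at most two nonzero entries is \emph{not} preserved under orthogonal complementation (in $\R^3$ the subspace $\mathrm{span}\{e_1-e_2,e_2-e_3\}$ has this property while its complement $\R(1,1,1)$ does not), so whatever support structure you extract for $\ker(P_L)=j_q(L^{\perp})$ does not transfer to $L$ by formal duality. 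You would need the finer statement that $\ker(P_L)$ splits as a direct sum of its intersections with coordinate blocks, each block of size at least two contributing at most a line with full support, and then observe that the orthocomplement inside $\R^{B}$ of such a line is spanned by $2$-sparse vectors. Third, the Jacobian argument itself is only sketched: one must justify that the common refinement of the eigenspace decompositions of $\ker(P_L)$ under all the diagonal matrices $D\phi(x')^{-1}D\phi(x)$, for $x,x'$ ranging over generic points of $\ker(P_L)$, again yields a direct-sum decomposition, and handle the points where coordinates vanish. Finally, note that your route would also have to reprove the one-dimensional case (the hardest ingredient), which the paper takes from Deutsch and reduces to by induction via the row-operation Lemma~\ref{lem:LinSys}. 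The strategy looks salvageable, but as it stands the hard half of the theorem is unproved.
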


For the proof we recall the following characterisation of the one-dimensional subspaces of $\ell_p$ with linear metric projection due to F.~Deutsch.

\begin{lemma}[{Corollary~5.3 in~\cite[p.~290]{Deutsch1982}}]\label{lem:DeutschOneDim}
  A one-dimensional subspace of $\ell_p$, $1<p<\infty$, $p\neq 2$, admits a linear metric projection if and only if it is spanned by an element with at most two non-zero coordinates.
\end{lemma}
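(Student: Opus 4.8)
The plan is to reduce everything to the elementary properties of the map $\psi(t):=\sign(t)\,|t|^{p-1}$ and then to invoke Lemma~\ref{lem:LinNullset}, which says that $P_L$ is linear if and only if $\ker(P_L)$ is a linear subspace. Writing $L=\R a$, the computation in Lemma~\ref{lpn_kernel} carries over verbatim to $\ell_p$ (the relevant series converges by Hölder's inequality, and the convex function $\alpha\mapsto\|x-\alpha a\|^p$ may be differentiated termwise), so that
\[
  \ker(P_L)=\Big\{x\in\ell_p\colon \sum_i a_i\,\psi(x_i)=0\Big\}.
\]
The two facts about $\psi$ that drive both implications are that it is multiplicative, $\psi(st)=\psi(s)\psi(t)$, and that it is a strictly increasing odd bijection of $\R$; both are immediate from the definition, and the whole argument turns on the observation that $\psi$ is \emph{additive} if and only if $p=2$.

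For the ``if'' direction, suppose $a$ has at most two nonzero coordinates. If $a=a_1e_i$, then $\ker(P_L)=\{x\colon x_i=0\}$, a closed hyperplane. If $a=a_1e_i+a_2e_j$ with $a_1,a_2\neq0$, let $c$ be the unique real with $\psi(c)=-a_1/a_2$. By multiplicativity, the relation $a_1\psi(x_i)+a_2\psi(x_j)=0$ is equivalent to $\psi(x_j)=\psi(c)\psi(x_i)=\psi(cx_i)$, hence, by injectivity of $\psi$, to $x_j=cx_i$. Thus $\ker(P_L)$ is again a closed hyperplane of $\ell_p$, and Lemma~\ref{lem:LinNullset} yields linearity of $P_L$.

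For the ``only if'' direction I would argue by contraposition: assuming $a$ has at least three nonzero coordinates, I exhibit a failure of additivity of $\ker(P_L)$. Fix three indices $i,j,k$ in the support and restrict to the coordinate subspace $V=\operatorname{span}(e_i,e_j,e_k)$; since for $x\in V$ all other terms vanish, $\ker(P_L)\cap V=\{x\in V\colon a_i\psi(x_i)+a_j\psi(x_j)+a_k\psi(x_k)=0\}$, and linearity of $P_L$ would force this to be a subspace of $V$. The key simplification is a diagonal, invertible change of coordinates $x_\ell=c_\ell u_\ell$ with $c_\ell:=\psi^{-1}(1/a_\ell)$, which by multiplicativity sends $a_\ell\psi(x_\ell)$ to $\psi(u_\ell)$ and so reduces the defining relation to $\psi(u_i)+\psi(u_j)+\psi(u_k)=0$, i.e.\ to the all-ones coefficient vector. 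Now $(1,-1,0)$ and $(1,0,-1)$ satisfy this relation, while their sum $(2,-1,-1)$ gives $\psi(2)+2\psi(-1)=2^{p-1}-2\neq0$ because $p\neq2$. Hence $\ker(P_L)\cap V$ is not closed under addition, so $\ker(P_L)$ is not a subspace and $P_L$ is not linear, again by Lemma~\ref{lem:LinNullset}.

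The points requiring the most care are the justification of the infinite-dimensional kernel formula and, in the ``if'' case, the verification that $\ker(P_L)$ is genuinely a closed hyperplane of $\ell_p$ rather than merely a hypersurface. The normalization $x_\ell=c_\ell u_\ell$ is the step that makes the obstruction transparent, and it is precisely the reduction to the all-ones vector together with the test point $(2,-1,-1)$ that isolates the role of the hypothesis $p\neq2$, the case $p=2$ giving a genuine subspace in accordance with the Hilbert space situation.
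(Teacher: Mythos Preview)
Your argument is correct. The paper, however, does not prove this lemma at all: it is stated as a quotation of Corollary~5.3 from Deutsch's 1982 paper and used as a black box in the proof of Theorem~\ref{thm:MainThmLin}. So rather than matching or differing from the paper's proof, you have supplied a self-contained argument where the paper simply cites the literature.

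What you do is nonetheless in the spirit of the surrounding section: you reuse Lemma~\ref{lpn_kernel} (extended to infinite sequences, which you justify) together with Lemma~\ref{lem:LinNullset}, and reduce the question to whether the level set $\{x:\sum_i a_i\psi(x_i)=0\}$ is a linear subspace. The multiplicativity trick $\psi(st)=\psi(s)\psi(t)$ both identifies $\ker(P_L)$ as a genuine hyperplane when $|\supp a|\le 2$ and, via the diagonal rescaling $x_\ell=c_\ell u_\ell$, normalises the three-coordinate case to the all-ones vector, where the explicit test $(1,-1,0)+(1,0,-1)=(2,-1,-1)$ isolates exactly the condition $2^{p-1}=2$. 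This is a clean and elementary route; Deutsch's original argument proceeds through his general characterisation of one-dimensional Chebyshev subspaces with linear metric projection in smooth spaces and then specialises to $\ell_p$, so your approach trades generality for directness. One small point worth making explicit in a final write-up is that the diagonal change of variables is linear and invertible, so it preserves the property of being a subspace; you use this implicitly when transporting the failure of additivity back from $u$-coordinates to $x$-coordinates.
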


The second tool we need for the proof of this theorem is the following lemma.

\begin{lemma}\label{lem:LinSys}
  Let $\phi\colon\mathbb{R}\to\mathbb{R}$ be a bijection, $\Phi\colon \R^n\to\R^n$ the function which applies $\phi$ to each entry $\Phi((x_1,\dotsc,x_n)):=(\phi(x_i))_{i=1}^n$, let $A\in\R^{d\times n}$ be a matrix with $d\leq n$ and let
  \[
    K=\{x\in\R^n\colon A\Phi(x)=0\}
  \]
  be a linear subspace of $\R^n$. Then there are row operations $E_1,\dotsc,E_m\in \R^{d\times d}$ such that the solution set
  \[
    \widetilde K_i=\{x\in\R^n\colon \widetilde A_{i-}\Phi(x)=0\}
  \]
  of every single row of our new matrix $\widetilde A=E_m\dotsm E_1A$ is a linear subspace. 
\end{lemma}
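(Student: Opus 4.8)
The plan is to show that the reduced row echelon form of $A$ already does the job, with the linearity of $K$ being exactly what forces each of its rows to have a linear solution set. First I would take $E_1,\dots,E_m\in\R^{d\times d}$ to be elementary matrices implementing Gaussian elimination, so that $\widetilde A:=E_m\cdots E_1A$ is the reduced row echelon form of $A$. Since the $E_i$ are invertible, $\{x:\widetilde A\Phi(x)=0\}=\{x:A\Phi(x)=0\}=K$, and any zero row of $\widetilde A$ trivially yields $\widetilde K_i=\R^n$; so only the pivot rows need to be considered.

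The key observation is that $\widetilde A$ exhibits $K$ as a graph. Let $p_1<\dots<p_r$ be the pivot columns and $F=\{1,\dots,n\}\setminus\{p_1,\dots,p_r\}$ the free ones. By definition of the reduced row echelon form, the $j$-th pivot row equals $e_{p_j}+\sum_{c\in F}\widetilde A_{jc}e_c$, so the associated equation $\phi(x_{p_j})+\sum_{c\in F}\widetilde A_{jc}\phi(x_c)=0$ is --- using only that $\phi$ is a bijection --- equivalent to $x_{p_j}=h_j\bigl((x_c)_{c\in F}\bigr)$ with $h_j(\xi):=\phi^{-1}\bigl(-\sum_{c\in F}\widetilde A_{jc}\phi(\xi_c)\bigr)$. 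Hence $K$ is precisely the graph of $H:=(h_1,\dots,h_r)\colon\R^F\to\R^{\{p_1,\dots,p_r\}}$; equivalently, the coordinate projection $\pi_F\colon\R^n\to\R^F$ restricts to a bijection $K\to\R^F$ whose inverse, followed by the projection onto the pivot coordinates, is $H$.

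Now I would invoke the hypothesis. The map $\pi_F|_K$ is the restriction of a linear map and is a bijection, so if $K$ is a linear subspace it is a linear isomorphism, its inverse is linear, and therefore $H$ --- and with it each component $h_j$ --- is linear; write $h_j(\xi)=\sum_{c\in F}\gamma^{(j)}_c\xi_c$. Then
\[
  \widetilde K_j=\{x\in\R^n:\widetilde A_{j-}\Phi(x)=0\}=\Bigl\{x\in\R^n: x_{p_j}-\sum_{c\in F}\gamma^{(j)}_c x_c=0\Bigr\},
\]
which is the kernel of a linear functional, hence a linear subspace. Together with the remark on zero rows, this proves the lemma.

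I do not expect a technically hard step; the crux is simply the realisation that passing to reduced row echelon form turns $K$ into the graph of a function, after which ``$K$ is a linear subspace'' becomes equivalent to ``that function is linear'' (the graph of a map between vector spaces is a subspace precisely when the map is linear). The only things to be careful about are that $\pi_F$ genuinely maps $K$ bijectively onto $\R^F$ --- for which bijectivity of $\phi$ is enough, and $\phi(0)=0$ is not needed --- and the usual bookkeeping with pivot and free columns.
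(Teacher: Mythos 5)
Your proposal is correct. It rests on the same two ingredients as the paper's proof --- bring $A$ into (reduced) row echelon form, use the bijectivity of $\phi$ to solve each pivot row for its pivot variable, and exploit the fact that the graph of a map between vector spaces is a linear subspace exactly when the map is linear --- but you package them as a single global argument instead of the paper's induction on $d$. The paper peels off one row at a time: it shows that $K$ is the graph of $f_1$ restricted to the solution set $M_1$ of the remaining rows, concludes that $f_1|_{M_1}$ is linear and $M_1$ is a linear subspace, and recurses on the $(d-1)\times n$ submatrix. Your version treats all pivot rows simultaneously: $K$ is the graph of $H=(h_1,\dotsc,h_r)$ over the \emph{entire} free-coordinate space $\R^F$, the coordinate projection $\pi_F|_K$ is a linear bijection onto $\R^F$, and therefore each $h_j$ is linear on all of $\R^F$, which immediately makes each $\widetilde K_j$ the kernel of a linear functional. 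This buys you something real: in the inductive step one only obtains linearity of $f_1$ on the proper subspace $M_1$, whereas the set $\widetilde K_1$ is the graph of $f_1$ over all of $\R^{n-1}$, so an extra argument is needed there; your global formulation sidesteps this entirely. You also handle zero rows (the rank-deficient case) explicitly, which the paper's reduction to ``every row has a pivot'' glosses over. The only bookkeeping points --- surjectivity of $\pi_F|_K$ onto $\R^F$, and that $H(0)=0$ follows from $0\in K$ rather than from any assumption on $\phi(0)$ --- are exactly the ones you flag, and they check out.
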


\begin{proof}
  We show this by induction over the dimension~$d$. Since the case $d=1$ is obvious, we assume that $d>1$. We can eliminate entries from $A$ using row operations until every row in $A$ has a pivot. However, since we cannot permute columns, in general, the pivot of the $i$-th row will not be at the $i$-th position, so let $j(i)$ denote the column index of the $i$-th pivot.
  Now our equation looks like this:
  \begin{gather*}
    \left[
      \begin{array}{c|c|c|c|c|c|c}
        \multirow{5}{*}{\huge0}\ &a_{1,j(1)}&\ *\ &0&*&0&\ \multirow{6}{*}{\huge*}\\
                                 &0&0&a_{2,j(2)}&*&\vdots&\\
                                 &\vdots&\vdots&\ddots&\ddots&0&\\
                                 &0&0&0&0&a_{d,j(d)}&
      \end{array}\right]\Phi(x)=0.
  \end{gather*}
  where the pivots are the only non-zero entries in their respective column and the rows are ordered in such a way that $j$ is strictly increasing. Let $P$ be the invertible matrix that transforms $A$ into row echelon form, i.e. $PA$ is the matrix above.

  Note that without loss of generality, we may assume that there are no leading columns of zeros. This allows us also to assume without loss of generality that $j(1)=1$. Then we can define
  \[
    f_1(x_2,\dotsc,x_n):=\phi^{-1}\left(-\frac{\vphantom{\frac11}\sum_{j=2}^n(PA)_{1j}\phi(x_j)}{(PA)_{11}}\right)
  \]
  and $M_1:=\bigcap_{i=2}^d\tilde{K}_i$. We get that $\tilde{K}_1=\{x\in\R^n\colon x_1=f_1(x_2,\dotsc,x_n)\}$ and
  \[
    K=\left\{\left(\vphantom{\sum}f_1(x_2,\dotsc,x_n),x_{2},\dotsc,x_n\right)\in\R^n \colon (x_2,\dotsc,x_n)\in M_1\right\}=\Gamma_{f_1|_{M_1}}.
  \]
  Since $K$ is a linear subspace, the above implies that the graph of $f_1|_{M_1}$ is a linear subspace and hence $f_1|_{M_1}$ is linear which implies that $M_1$ is a linear subspace. In other words, we have shown that the intersection $\bigcap_{i=2}^{d}\tilde{K}_i$ is a linear subspace. We can now repeat this process for the $(d-1)\times n$ matrix
  \[
    A_1:=\left(\left((PA)_{ij}\right)_{i=2}^d\right)_{j=1}^n
  \]
  that consists of all entries of $PA$ except for the first row. We can do this since we now know that $M_1$, the set defined by $A_1\tilde\Phi(x_2,\dotsc,x_n)=0$, is a linear subspace. Moreover, the first column of $A_1$ is zero and can therefore be omitted. Now the claim follows by the induction hypothesis.
\end{proof}

\begin{proof}[Proof of Theorem~\ref{thm:MainThmLin}]
  That the condition on~$L$ is sufficient is a direct consequence of Lemma~\ref{lem:DeutschOneDim} and Lemma~\ref{oplus_linear}.

  For the proof of the converse implication, we use an inductive argument over the dimension of the subspace $L\subset\ell_p^n$ and note that the one-dimensional case follows from Lemma~\ref{lem:DeutschOneDim}. We are left to consider the case of a $d$-dimensional subspace represented as $L=\bigoplus_{i=1}^d L_i$ where all $L_i$ are one-dimensional. Since we assume that $P_L$ is linear, we may use Lemma~\ref{oplus_linear} to conclude that the set $\bigcap_{i=1}^{d}\ker(P_{L_i})$ is a linear subspace. We now show that this implies the existence of a representation $L = \bigoplus_{i=1}^{d} \tilde{L}_i$ where each of the $\tilde{L}_i$ is spanned by an element with at most two nonzero entries. To this end, we pick a basis of the $L_i$. So, let $a^{(i)}\in L_i\setminus\{0\}$ and write their coordinates with respect to the standard basis into a $d\times n$ matrix $A:=({a_j^{(i)}})_{ij}$. Then by Lemma~\ref{oplus_linear} the intersection $K=\ker(P_{L_1})\cap \cdots\cap \ker(P_{L_d})$ can be described as
  \[
    K= \left\{x\in\R^n \colon A \begin{bmatrix}\sign(x_1)|x_1|^{p-1} \\ \vdots \\ \sign(x_n)|x_n|^{p-1} \end{bmatrix} =0\right\}.
  \]
  Since $\phi\colon\R\rightarrow\R\colon x\mapsto \sign(x)|x|^{p-1}$ is bijective for all $p>1$, we can apply elementary row operations to the system of linear equations $Ay=0$ where $y_i=\phi(x_i)$ without changing its solution set, and due to the one-to-one correspondence between $y$ and $x$ that means the solution set of $A\Phi(x)=0$ also stays the same under row operations (here, $\Phi(x)$ is a shorthand for $[\Phi(x_j)]_{j=1}^n$). Due to this fact, we will look at $A\Phi(x)=0$ as if it were a system of linear equations even if it is not when viewed as equations in $x$. Now we may apply Lemma~\ref{lem:LinSys} to obtain a matrix $\tilde{A}$ such that $\tilde{A}\Phi(x)=0$ if and only if $A\Phi(x)=0$ and with the property that the sets $\tilde{K}_i = \{x\in\mathbb{R}^{n} \colon \tilde{A}_{i-}\Phi(x)=0\}$ are linear subspaces. By Lemma~\ref{lpn_kernel} the sets $\tilde{K}_i$ are kernels of the metric projection onto a single one-dimensional subspace $\widetilde L_i$. Since they are linear subspaces, Lemmas~\ref{lem:LinNullset} and~\ref{lem:DeutschOneDim} imply that each of them is spanned by an element with at most two nonzero entries.
\end{proof}

\section{Alternating projections and random subspaces of $\ell_p^3$}
Since by Theorem~2.3 in~\cite[p.~22]{Sti1965:ClosestPointMaps} every at least three-dimensional Banach space $X$ which is not isomorphic to an inner product space, has to contain two subspaces $N$ and $M$ and a point $x\in X$ such that the sequence of iterates $(P_MP_N)^nx$ does not converge to $P_{M\cap N}x$, it seems natural to investigate the size of the set of pairs of subspaces $(M,N)$  where the alternating projection method converges to a projection onto the intersection. We will address this question in the particular case of $\ell_p^3$. Since in a three-dimensional space, the only nontrivial case is the one of two two-dimensional subspaces, we restrict ourselves to the case of pairs of two-dimensional subspaces of $\ell_p^3$.

We start our investigation by providing a description of the kernel of the metric projection onto a linear subspace of a finite dimensional $\ell_p$-space using the duality mapping.
\begin{lemma}\label{dualityofAorthogonal}
  The kernel of $P_A\colon\ell_p^n\rightarrow\ell_p^n$ can be written as $\ker P_A=j_q(A^\perp)$ using the duality mapping
  \[
    j_q(x)=\begin{bmatrix}
      \sign(x_1)|x_1|^{q-1}\\
      \vdots\\
      \sign(x_n)|x_n|^{q-1}
    \end{bmatrix},
  \]
  $q:=\frac p{p-1}$ the Hölder complement of $p$ and $A^\perp$ the $\ell_2$-orthogonal complement of $A$.
\end{lemma}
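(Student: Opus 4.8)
The plan is to reduce the statement to the one-dimensional description in Lemma~\ref{lpn_kernel} together with the fact that $\ker(P_A)$ is the intersection of the kernels of the metric projections onto a family of lines spanning $A$. Concretely, fix a basis $a^{(1)},\dots,a^{(d)}$ of $A$ and set $L_i:=\R a^{(i)}$, so that $A=\bigoplus_{i=1}^d L_i$. Iterating the identity $\ker(P_{B+C})=\ker(P_B)\cap\ker(P_C)$ obtained in the proof of Lemma~\ref{oplus_linear} — which applies since in a finite-dimensional space every subspace is closed and $\ell_p^n$ is uniformly convex and uniformly smooth for $1<p<\infty$ — one gets
\[
  \ker(P_A)=\bigcap_{i=1}^d\ker(P_{L_i}).
\]
By Lemma~\ref{lpn_kernel}, each set $\ker(P_{L_i})$ equals $\{x\in\R^n\colon \sum_{k=1}^n a^{(i)}_k\sign(x_k)|x_k|^{p-1}=0\}$, that is, the set of those $x$ for which the vector $j_p(x)$ with entries $\sign(x_k)|x_k|^{p-1}$ is $\ell_2$-orthogonal to $a^{(i)}$.

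Next I would rewrite this intersection in terms of the orthogonal complement: $x\in\ker(P_A)$ holds exactly when $\langle a^{(i)},j_p(x)\rangle=0$ for all $i$, and since the $a^{(i)}$ span $A$ this is equivalent to $j_p(x)\in A^\perp$. Hence $\ker(P_A)=\{x\in\R^n\colon j_p(x)\in A^\perp\}=j_p^{-1}(A^\perp)$. It then remains only to identify $j_p^{-1}$ with the duality map $j_q$: the coordinate function $t\mapsto\sign(t)|t|^{p-1}$ is a bijection of $\R$ whose inverse is $s\mapsto\sign(s)|s|^{1/(p-1)}$, and since $q=\frac{p}{p-1}$ gives $q-1=\frac{1}{p-1}$, this inverse is precisely $s\mapsto\sign(s)|s|^{q-1}$. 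Therefore $j_p^{-1}=j_q$ on $\R^n$, and substituting yields $\ker(P_A)=j_q(A^\perp)$, as claimed.

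There is no real obstacle here; the only points that need care are the justification that $\ker(P_A)$ splits as $\bigcap_{i=1}^d\ker(P_{L_i})$ (for which one should cite the computation inside the proof of Lemma~\ref{oplus_linear} rather than redo it) and the bookkeeping between the exponents $p-1$ and $q-1$ in the two duality maps. If one prefers to avoid the splitting step, one can instead characterise $P_Ax=0$ directly: by strict convexity and smoothness of $\ell_p^n$, the point $0$ minimises $a\mapsto\|x-a\|_p$ over $A$ if and only if $\frac{\mathrm d}{\mathrm dt}\|x-ta^{(i)}\|_p^p\big|_{t=0}=0$ for every basis vector $a^{(i)}$, and computing this derivative again gives $\langle a^{(i)},j_p(x)\rangle=0$, after which the argument proceeds as above.
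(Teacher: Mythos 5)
Your proposal is correct and follows essentially the same route as the paper: reduce to the one-dimensional description of Lemma~\ref{lpn_kernel}, reinterpret the resulting conditions as $j_p(x)\perp A$ in the $\ell_2$ sense, and invert $j_p$ to get $j_q(A^\perp)$ (the paper cites Cioranescu for $j_p^{-1}=j_q$ where you verify it by the exponent computation $q-1=\tfrac{1}{p-1}$, which is equally fine). If anything, you are more careful than the paper in justifying the step $\ker(P_A)=\bigcap_i\ker(P_{L_i})$, which the paper leaves implicit.
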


\begin{proof}
  From Lemma~\ref{lpn_kernel} we deduce that 
  \[
    \ker P_A =\left\{x\in\R^3\ \middle|\ \sum_{i=1}^na_i^{(k)}\sign(x_i)|x_i|^{p-1}=0\;\text{for}\;k=1,\ldots,m\right\}
  \]
  for some basis $\{a^{(1)},\ldots,a^{(m)}\}$ of $A$. These conditions are equivalent to the $\ell_2$-orthogonality of both $a^{(1)}, \ldots, a^{(m)}$ to the vector
  $j_p(x)$. Since $j_p^{-1}=j_q$, see e.g. Corollary~3.5 in~\cite[p.~62]{MR1079061}, we obtain
  \begin{gather*}
    \ker(P_A)=\left\{x\in\R^3\ \middle|\ \langle a^{(1)},j_p(x)\rangle= \ldots=\langle a^{(m)},j_p(x)\rangle=0\right\}=\\
    =\left\{j_q(x)\in\R^3\ \middle|\ \langle a^{(1)},x\rangle=\ldots=\langle a^{(m)},x\rangle=0\right\}=j_q(A^\perp),
  \end{gather*}
  which finishes the proof.
\end{proof}

We call a two-dimensional subspace $A$ of $\R^3$ \emph{chosen uniformly at random} if there is a vector $a$ chosen uniformly from the Euclidean unit sphere such that $\langle a,x\rangle=0$ for all $x\in A$, i.e. $A$ is the $\ell_2$-\!\! orthogonal complement of some uniformly random $\ell_2$-unit vector~$a$.
\begin{proposition}\label{probzero}
Let $X=\ell_p^3$ and $A$ and $B$ be two-dimensional subspaces chosen uniformly at random. Then $P_A$ and $P_B$ are linear, but $\mathbb P(P_{A\cap B}\text{ is linear})=0$.
\end{proposition}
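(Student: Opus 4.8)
The plan is to prove the two assertions separately, invoking Theorem~\ref{thm:MainThmLin} in each case.

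For the first assertion, since $a$ lies on the Euclidean unit sphere it is in particular nonzero, so $A=\{a\}^{\perp}$ is a genuine two-dimensional subspace of $\R^{3}$, and likewise for $B$. I would then observe that \emph{every} two-dimensional subspace $L$ of $\R^{3}$ is spanned by vectors with at most two nonzero entries: writing $L$ as the solution set of the single equation $\langle a,x\rangle=0$ and choosing an index $j$ with $a_{j}\neq 0$, one solves for $x_{j}$ in terms of the other two coordinates; setting one of the remaining (free) coordinates equal to $1$ and the other equal to $0$ yields two linearly independent vectors spanning $L$, each having at most two nonzero entries, namely the chosen free coordinate and the $j$-th one. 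By Theorem~\ref{thm:MainThmLin} this forces $P_{A}$ and $P_{B}$ to be linear. (This part uses randomness only insofar as it guarantees $a,b\neq 0$.)

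For the second assertion, note that $A\cap B=\{x\in\R^{3}\colon\langle a,x\rangle=\langle b,x\rangle=0\}$, which equals the one-dimensional subspace $\R\,(a\times b)$ whenever $a$ and $b$ are linearly independent. By Theorem~\ref{thm:MainThmLin}, for such a pair $P_{A\cap B}$ is linear precisely when $a\times b$ has at least one vanishing coordinate; and in the remaining case, where $a$ and $b$ are linearly dependent, one has $a\times b=0$, which trivially also has a vanishing coordinate. Hence the event that $P_{A\cap B}$ is linear is contained in $\bigcup_{i=1}^{3}\{(a\times b)_{i}=0\}$, and it suffices to show $\mathbb{P}\big((a\times b)_{i}=0\big)=0$ for each $i$. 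Fixing $i=1$, say, we have $(a\times b)_{1}=a_{2}b_{3}-a_{3}b_{2}=\langle a,(0,b_{3},-b_{2})\rangle$; for every $b$ outside the two-point (hence null) set $\{\pm e_{1}\}$ the vector $(0,b_{3},-b_{2})$ is nonzero, so $\{a\in S^{2}\colon\langle a,(0,b_{3},-b_{2})\rangle=0\}$ is a great circle and therefore has surface measure zero on $S^{2}$. By Fubini's theorem $\mathbb{P}((a\times b)_{1}=0)=0$, and the same holds for $i=2,3$ by symmetry; summing the three contributions gives $\mathbb{P}(P_{A\cap B}\text{ is linear})=0$.

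The argument is largely routine; the step that calls for a little care is the final measure-theoretic one, and in particular the bookkeeping needed to absorb the degenerate configurations (those with $\dim(A\cap B)=2$, i.e. $A=B$) into the same null set --- which works because $a\times b=0$ makes \emph{all} coordinates vanish. As an alternative to the Fubini computation, one may note that $\{(a,b)\in S^{2}\times S^{2}\colon(a\times b)_{i}=0\}$ is the zero set of a polynomial that does not vanish identically on the connected real-analytic manifold $S^{2}\times S^{2}$, hence is a set of measure zero for the (smooth) product of the uniform measures.
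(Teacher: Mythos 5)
Your argument is correct and follows essentially the same route as the paper: identify $A\cap B=\R(a\times b)$, invoke Theorem~\ref{thm:MainThmLin} to reduce linearity of $P_{A\cap B}$ to the vanishing of a coordinate of $a\times b$, and show that this event has probability zero. The only differences are minor: you justify linearity of $P_A$ and $P_B$ via Theorem~\ref{thm:MainThmLin} rather than the general fact that metric projections onto hyperplanes are linear, you establish the null-set statement by conditioning on $b$ and a great-circle (or polynomial zero-set) argument instead of the paper's explicit spherical parameterization and union bound, and you explicitly absorb the degenerate case $A=B$, which the paper simply discards as almost surely not occurring.
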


\begin{proof}
  Since the metric projection onto hyperplanes is always linear, in~$\R^3$ all planes have a linear projection, i.e. the maps $P_A$ and $P_B$ are linear. On the other hand, by Theorem~\ref{thm:MainThmLin} $P_{A\cap B}$ is linear iff $A\cap B$, which is almost surely one-dimensional, is spanned by a vector with at most two non-zero entries.
  Since the space $A\cap B$ is the set of all points which are $\ell_2$-orthogonal to both $a$ and $b$, we know that
  \[
    A\cap B=\R(a\times b)=\R\begin{bmatrix}a_2b_3-a_3b_2\\
      a_3b_1-a_1b_3\\
      a_1b_2-a_2b_1\end{bmatrix}.
  \]
  Therefore using the union bound and the fact that $a_i\sim a_j\sim b_k$ for any $i,j,k\in\{1,2,3\}$ we conclude that
  \begin{align*}
    \mathbb P(P_{A\cap B}\text{ is linear})&=\mathbb P(a_2b_3=a_3b_2\lor a_3b_1=a_1b_3\lor a_1b_2=a_2b_1)\\\leq3\mathbb P(a_1b_2=a_2b_1).    
  \end{align*}
  We are left to compute the latter probability. By construction of our random model, we have
  \[
    a=\begin{bmatrix}
      \cos\alpha\sqrt{1-\phi^2}\\
      \sin\alpha\sqrt{1-\phi^2}\\
      \phi
    \end{bmatrix},\ 
    b=\begin{bmatrix}
      \cos\beta\sqrt{1-\psi^2}\\
      \sin\beta\sqrt{1-\psi^2}\\
      \psi
    \end{bmatrix}, \alpha,\beta\in\mathcal{U}_{[-\pi,\pi]},\phi,\psi\in\mathcal{U}_{[-1,1]}.
  \]
  Therefore,
  \begin{align*}
    \mathbb P(a_1b_2=a_2b_1)& =\mathbb P\left(\cos\alpha\sqrt{1-\phi^2}\sin\beta\sqrt{1-\psi^2}=\sin\alpha\sqrt{1-\phi^2}\cos\beta\sqrt{1-\psi^2}\right)\\
                            &\leq\mathbb P(\phi^2=1)+\mathbb P(\psi^2=1)+\mathbb P(\sin(\beta-\alpha)=0)\\
                            &=0+\mathbb P(\{k\in\mathbb Z\colon\beta=\alpha+ k\pi\})=0,
  \end{align*}
  as claimed.
\end{proof}

\bigskip
{\noindent\textbf{\textsf{Acknowledgement.}}} This research was funded by the Tyrolean Science Fund (Tiroler Wissenschaftsförderung).


\begin{thebibliography}{10}

\bibitem{AtlestamSullivan}
Barbro Atlestam and Francis Sullivan.
\newblock Iteration with best approximation operators.
\newblock {\em Rev. Roumaine Math. Pures Appl.}, 21(2):125--131, 1976.

\bibitem{MR1079061}
Ioana Cioranescu.
\newblock {\em Geometry of {B}anach spaces, duality mappings and nonlinear
  problems}, volume~62 of {\em Mathematics and its Applications}.
\newblock Kluwer Academic Publishers Group, Dordrecht, 1990.

\bibitem{Deutsch1982}
Frank Deutsch.
\newblock Linear selections for the metric projection.
\newblock {\em J. Functional Analysis}, 49(3):269--292, 1982.

\bibitem{HolmesKripke}
Richard Holmes and Bernard Kripke.
\newblock Smoothness of approximation.
\newblock {\em Michigan Math. J.}, 15:225--248, 1968.

\bibitem{KopeckaReich}
Eva Kopeck\'{a} and Simeon Reich.
\newblock A note on the von {N}eumann alternating projections algorithm.
\newblock {\em J. Nonlinear Convex Anal.}, 5(3):379--386, 2004.

\bibitem{KopeckaReich2}
Eva Kopeck\'{a} and Simeon Reich.
\newblock Another note on the von {N}eumann alternating projections algorithm.
\newblock {\em J. Nonlinear Convex Anal.}, 11(3):455--460, 2010.

\bibitem{KopeckaReich3}
Eva Kopeck\'{a} and Simeon Reich.
\newblock Alternating projections and orthogonal decompositions.
\newblock {\em J. Nonlinear Convex Anal.}, 12(1):155--159, 2011.

\bibitem{masterthesis}
Franz Luggin.
\newblock Metric projections and the alternating projection method in {B}anach
  spaces.
\newblock Master's thesis, Universität Innsbruck, 2020.

\bibitem{Sti1965:ClosestPointMaps}
Wilbur Stiles.
\newblock Closest-point maps and their products.
\newblock {\em Nieuw Arch. Wisk. (3)}, 13:19--29, 1965.

\bibitem{MR188759}
Wilbur~J. Stiles.
\newblock A solution to {H}irschfeld's problem.
\newblock {\em Nieuw Arch. Wisk. (3)}, 13:116--119, 1965.

\bibitem{Sullivan}
Francis Sullivan.
\newblock A generalization of best approximation operators.
\newblock {\em Ann. Mat. Pura Appl. (4)}, 107:245--261 (1976), 1975.

\bibitem{vNe1949:RingsOfOperators}
John von Neumann.
\newblock On rings of operators. {R}eduction theory.
\newblock {\em Ann. of Math. (2)}, 50:401--485, 1949.

\bibitem{Wulbert}
D.~E. Wulbert.
\newblock Continuity of metric projections.
\newblock {\em Trans. Amer. Math. Soc.}, 134:335--341, 1968.

\end{thebibliography}
\end{document}